\documentclass[12pt,a4paper,reqno]{article}
\usepackage{amssymb,amsmath,amsfonts}
\usepackage{fullpage}
\usepackage[cp1250]{inputenc}
\usepackage{ifthen}
\usepackage{amssymb}
\usepackage{amsmath}
\usepackage{amsfonts}
\usepackage{latexsym}
\usepackage{cite}
\usepackage{hyperref}
\usepackage{graphics, setspace}
\setcounter{MaxMatrixCols}{10}

\newcommand{\mathsym}[1]{{}}
\newcommand{\unicode}[1]{{}}

\newtheorem{theorem}{Theorem}[section]

\newtheorem{corollary}{Corollary}[section]

\newtheorem{lemma}{Lemma}[section]

\newtheorem{remark}[theorem]{Remark}

\numberwithin{equation}{section}
\newenvironment{proof}[1][Proof]{\noindent\textbf{#1.} }{\ \rule{0.5em}{0.5em}}
\newenvironment{proof-1}[1][Proof of Theorem \ref{main}]{\noindent\textbf{#1.} }{\ \rule{0.5em}{0.5em}}
\begin{document}

\title{\textbf{\large Radii of the }$\beta -$\textbf{{\large uniformly 
\textbf{convex} of order $\alpha $ of Lommel and Struve functions}}}
\author{{\textbf{\normalsize Sercan Topkaya}}, {\textbf{\normalsize Erhan
Deniz} and}\textbf{\ }{\textbf{{\normalsize \ Murat Ça\u{g}lar}}}}
\date{{\normalsize Department of Mathematics, Faculty of Science and Letters,%
}\\
Kafkas University, Campus, 36100, Kars-Turkey\\
[0.4mm] \textit{topkaya.sercan@hotmail.com}, \textit{edeniz36@gmail.com }\&
\textit{mcaglar25@gmail.com}}
\maketitle

\begin{abstract}
In this paper, we determine the radii of $\beta -$uniformly convex of order $%
\alpha $ for six kinds of normalized Lommel and Struve functions of the
first kind. In the cases considered the normalized Lommel and Struve
functions are $\beta -$uniformly convex functions of order $\alpha $ on the
determined disks.  \\[0.2cm]
{\footnotesize \textbf{Keywords:} Lommel functions, Struve functions,
Univalent functions, }$\beta -${\footnotesize uniformly convex functions of
order }$\alpha ${\footnotesize , Zeros of Lommel functions of the first
kind, Zeros of Struve functions of the first kind. \newline
\textbf{AMS Class:} 30C45, 30C80, 33C10.}
\end{abstract}

\section{Introduction and Preliminaries}

It is well known that the concepts of convexity, starlikeness,
close-to-convexity and uniform convexity including necessary and sufficient
conditions, have a long history as a part of geometric function theory. It
is known that special functions, like Bessel, Struve and Lommel functions of
the first kind have some beautiful geometric properties. Recently, the above
geometric properties of the Bessel functions were investigated by some
earlier results (see \cite{Deniz,Baricz02,Baricz03,Baricz04,Baricz05,Baricz06}). On the other hand the radii of convexity
and starlikeness of the Struve and Lommel functions were studied by Baricz
and his coauthors \cite{Baricz3,Baricz5}. Motivated by the above
developments in this topic, in this paper our aim is to give some new
results for the radius of $\beta -$uniformly convex functions of order $\alpha $ of the normalized Struve and Lommel functions of the first
kind. In the special cases of the parameters $\alpha $ and $\beta $ we can
obtain some earlier results.The key tools in their proofs were some Mittag-Leffler expansions of Lommel and Struve functions of the first kind, special properties of the zeros of these functions and their derivatives.\\
Let $U(z_{0},r)=\{z\in \mathbb{C}:|z-z_{0}|<r\}$ denote the disk
of radius $r$ and center $z_{0}.$ We let $U(r)=U(0,r)$ and $U=U(0,1)=\{z\in 
\mathbb{C}:|z|<1\}.$ Let $(a_{n})_{n\geq 2}$ be a sequence of complex
numbers with 
\begin{equation*}
d=\limsup_{n\rightarrow \infty }|a_{n}|^{\frac{1}{n}}\geq 0,\ \text{and}\
r_{f}=\frac{1}{d}.
\end{equation*}%
If $d=0$ then $r_{f}=+\infty .$ As usual, with $\mathcal{A}$ we denote the
class of analytic functions $f:U(r_{f})\rightarrow 
\mathbb{C}
$ of the form 
\begin{equation}
f(z)=z+\sum_{n=2}^{\infty }{a}_{n}z^{n}.  \label{1}
\end{equation}%
We say that a function $f$ of the form (\ref{1}) is convex if $f$ is
univalent and $f(U(r))$ is a convex domain in $\mathbb{C}$. An analytic
description of this definition is 
\begin{equation*}
f\in \mathcal{A}\ \text{is convex\ if and only if}\ \Re \left( 1+\frac{%
zf^{\prime \prime }(z)}{f^{\prime }(z)}\right) >0,\ \ z\in {U(r)}.
\end{equation*}%
The radius of convexity of the function $f$ is defined by 
\begin{equation*}
r_{f}^{c}=\sup \left\{ r\in (0,r_{f}):\ \Re \left( 1+\frac{zf^{\prime \prime
}(z)}{f^{\prime }(z)}\right) >0,\ \ z\in {U}(r)\right\} .
\end{equation*}

In the following we deal with the class of the uniformly convex functions.
Goodman in \cite{Goo} introduced the concept of uniform convexity for
functions of the form (\ref{1}). A function $f$ is said to be uniformly
convex in $U(r)$ if $f$ is of the form (\ref{1}), it is convex, and has the
property that for every circular arc $\gamma $ contained in $U(r)$, with
center $\varsigma $, also in $U(r),$ the arc $f(\gamma )$ is convex. In 1993, R\o nning \cite{10} determined necessary and sufficient conditions
of analytic functions to be uniformly convex in the open unit disk, while in
2002 Ravichandran \cite{Rav} also presented simpler criteria for uniform
convexity. R\o %
nning in \cite{10} give an analytic description of the uniformly convex
functions in the following theorem.

\begin{theorem}
\label{T1} Let $f$ be a function of the form $f(z)=z+\sum_{n=2}^{\infty
}a_{n}z^{n}$ in the disk $U(r).$ The function $f$ is uniformly convex in the
disk $U(r)$ if and only if 
\begin{equation}
\Re\left( 1+\frac{zf^{\prime \prime }(z)}{f^{\prime }(z)}\right) >\left\vert 
\frac{zf^{\prime \prime }(z)}{f^{\prime }(z)}\right\vert ,\ \ \ z\in {U(r).}
\label{d1}
\end{equation}
\end{theorem}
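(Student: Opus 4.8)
The plan is to argue directly from Goodman's geometric definition of uniform convexity, translating the requirement ``the image of every admissible circular arc is convex'' into an analytic inequality and then eliminating the auxiliary parameter (the center of the arc). First I would fix an arc $\gamma \subset U(r)$ lying on the circle of center $\varsigma \in U(r)$ and radius $\rho$, and parametrize it by $\zeta(\phi) = \varsigma + \rho e^{i\phi}$. Writing the image curve as $w(\phi) = f(\zeta(\phi))$ and recalling that a smooth arc is convex precisely when the argument of its tangent vector is monotone, I would compute $\frac{d}{d\phi}\arg w'(\phi) = \Im(w''/w')$. Using $\zeta'(\phi) = i(\zeta - \varsigma)$ and $\zeta''(\phi) = -(\zeta - \varsigma)$, so that $\zeta''/\zeta' = i$, this collapses to
\[
\frac{d}{d\phi}\arg w'(\phi) = \Re\left(1 + \frac{(\zeta - \varsigma) f''(\zeta)}{f'(\zeta)}\right).
\]
This is the central identity linking the geometry of the image arc to the quantity $1 + \zeta f''/f'$ appearing in (\ref{d1}).

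Next I would record that convexity of $f(\gamma)$ means the displayed expression keeps a constant sign along $\gamma$. Taking $\varsigma = 0$ recovers the ordinary convexity condition $\Re(1 + \zeta f''/f') > 0$ for circles about the origin, which pins that sign down to be positive; hence uniform convexity forces
\[
\Re\left(1 + \frac{(\zeta - \varsigma) f''(\zeta)}{f'(\zeta)}\right) \ge 0
\]
for every point $\zeta$ and every admissible center $\varsigma$. The final step is to fix $\zeta = z$ and extremize over the centers $\varsigma$ compatible with an arc through $z$ lying in $U(r)$: writing $\frac{(z - \varsigma) f''(z)}{f'(z)} = \frac{z f''(z)}{f'(z)} - \frac{\varsigma f''(z)}{f'(z)}$, the most unfavorable center is the one maximizing $\Re(\varsigma f''(z)/f'(z))$, and evaluating this maximum converts the two--parameter inequality into the single--variable statement $\Re(1 + z f''/f') \ge |z f''/f'|$, which is (\ref{d1}). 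For the converse I would reverse these steps, verifying that membership of $1 + zf''/f'$ in the parabolic region $\{w : \Re w > |w-1|\}$ guarantees that every image arc turns monotonically.

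I expect the extremization over the center $\varsigma$ to be the main obstacle, for two reasons. First, the admissible centers are constrained by the requirement that the arc through $z$ genuinely lie in $U(r)$, so the maximum of $\Re(\varsigma f''(z)/f'(z))$ must be taken over the correct geometric locus rather than naively over the whole disk; extracting the sharp form, with the factor $|z|$ implicit in $|z f''/f'|$ rather than a cruder constant, depends on identifying the extremal arc precisely. Second, convexity of $f(\gamma)$ is genuinely a statement about an arc of positive length --- an infinitesimal arc is automatically convex --- so the reduction to a pointwise inequality must be justified by letting the arcs sweep out all of $U(r)$ and controlling where the tangent can first fail to turn monotonically. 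Once the extremal configuration is pinned down, the passage to the parabola $\Re w > |w-1|$, and hence to (\ref{d1}), is routine.
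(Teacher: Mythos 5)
First, a point of orientation: the paper does not prove Theorem \ref{T1} at all --- it is quoted from R\o nning \cite{10} --- so your attempt has to be measured against the standard argument of R\o nning and Ma--Minda rather than against anything in the text. Your opening moves are sound: the computation giving $\frac{d}{d\phi}\arg w'(\phi)=\Re\bigl(1+(\zeta-\varsigma)f''(\zeta)/f'(\zeta)\bigr)$ is correct, and it correctly converts Goodman's geometric definition \cite{Goo} into the two-variable criterion $\Re\bigl(1+(z-\varsigma)f''(z)/f'(z)\bigr)\geq 0$ for all pairs $(z,\varsigma)\in U(r)\times U(r)$. The gap is in your elimination of $\varsigma$. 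In Goodman's criterion the center is \emph{not} geometrically constrained by $z$: for every $\varsigma\in U(r)$ and every $z\in U(r)$, a sufficiently short arc of the circle through $z$ centered at $\varsigma$ lies in $U(r)$, so the two-variable inequality must hold for \emph{all} pairs. Maximizing $\Re(\varsigma f''(z)/f'(z))$ over all admissible centers therefore yields the necessary condition $\Re(1+zf''(z)/f'(z))\geq r\,|f''(z)/f'(z)|$, which at an interior point $|z|<r$ is strictly \emph{stronger} than (\ref{d1}) whenever $f''(z)\neq 0$. There is no ``extremal locus'' $|\varsigma|=|z|$ to be identified, so the pointwise extremization you describe cannot produce exactly $|zf''/f'|$; and, symmetrically, ``reversing the steps'' for sufficiency fails because (\ref{d1}) at a single point does not imply the two-variable inequality at that point.

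What rescues the theorem is that the two conditions are equivalent as conditions on the \emph{whole} disk, and the missing ingredient is precisely the minimum principle for harmonic functions --- the same tool this paper invokes repeatedly in its main theorems. For sufficiency: fix $\varsigma\in U(r)$ and observe that $z\mapsto\Re\bigl(1+(z-\varsigma)f''(z)/f'(z)\bigr)$ is harmonic. On any circle $|z|=\rho$ with $|\varsigma|\leq\rho<r$ one has $\Re(\varsigma f''(z)/f'(z))\leq|\varsigma|\,|f''(z)/f'(z)|\leq|zf''(z)/f'(z)|$, so (\ref{d1}) makes this harmonic function positive on that circle; the minimum principle then makes it positive on all of $|z|\leq\rho$, and letting $\rho\nearrow r$ gives Goodman's condition for every pair. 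For necessity, take $|\varsigma|=|z|$ with $\arg\varsigma$ chosen so that $\varsigma f''(z)/f'(z)=|zf''(z)/f'(z)|$; this yields (\ref{d1}) with $\geq$, and strictness follows from the open mapping theorem applied to $p(z)=1+zf''(z)/f'(z)$, since $p(0)=1$ lies in the interior of the parabolic region. Your second ``obstacle'' (infinitesimal arcs being automatically convex) is handled correctly by your monotone-tangent formulation and is not a real issue; the harmonic-minimum-principle step is the one your sketch cannot do without.
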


The class of uniformly convex function denote by $UC$. The radius of uniform convexity is defined by 
\begin{equation*}
r_{f}^{uc}=\sup \left\{ r\in (0,r_{f}):\ \Re \left( 1+\frac{zf^{\prime
\prime }(z)}{f^{\prime }(z)}\right) >\left\vert \frac{zf^{\prime \prime }(z)%
}{f^{\prime }(z)}\right\vert ,\ \ z\in {U}(r)\right\} .
\end{equation*}

In 1997, Bharti, Parvatham and Swaminathan defined $\beta
- $uniformly convex functions of order $\alpha$ which is subclass of uniformly convex functions. A function $f\in \mathcal{A}$ is said to be in the class of $\beta
- $uniformly convex functions of order $\alpha ,$ denoted by $\beta
-UC(\alpha )$, if 
\begin{equation}
\Re \left\{ 1+\frac{z{f}^{\prime \prime }(z)}{{f}^{\prime }(z)}\right\}
>\beta \left\vert \frac{z{f}^{\prime \prime }(z)}{{f}^{\prime }(z)}%
\right\vert +\alpha ,  \label{eq2}
\end{equation}%
where $\beta \geqslant 0,\;\alpha \in \lbrack 0,1)$ (see \cite{Bhar}).

This class generalize various other classes which are worthy to mention
here. For example, the class $\beta -UC(0)=\beta -UC$ is the class of $\beta -$uniformly\
convex functions \cite{Kanas1} (also see \cite{Kanas2} and \cite{Kanas3})
and $1-UC(0)=UC$ is the class of uniformly\ convex functions defined by
Goodman \cite{Goo} and Ronning \cite{10}, respectively.

\textbf{Geometric Interpretation. }It is known that $f\in \beta -UC(\alpha )$
if and only if $1+\frac{z{f}^{\prime \prime }(z)}{{f}^{\prime }(z)},$
respectively, takes all the values in the conic domain $\mathcal{R}_{\beta
,\alpha }$ which is included in the right half plane given by%
\begin{equation}
\mathcal{R}_{\beta ,\alpha }:=\left\{ {w=u+iv\in \mathbb{C}:u>\beta \sqrt{%
(u-1)^{2}+v^{2}}+\alpha ,\;\beta \geqslant 0\;}\text{and}{\;\alpha \in \left[
0{,1}\right) }\right\} .  \label{eq4}
\end{equation}%
Denote by $\mathcal{P}(P_{\beta ,\alpha }),$ $\left( \beta {\geq
0,\;0\leqslant \alpha <1}\right) $ the family of functions $p,$ such that $%
p\in \mathcal{P},$ where $\mathcal{P}$ denotes the well-known class of
Caratheodory functions and $p\prec P_{\beta ,\alpha }$ in $U.$ The function $%
P_{\beta ,\alpha }$ maps the unit disk conformally onto the domain $\mathcal{%
R}_{\beta ,\alpha }$ such that $1\in \mathcal{R}_{\beta ,\alpha }$ and $%
\partial \mathcal{R}_{\beta ,\alpha }$ is a curve defined by the equality { \small
\begin{equation}
\partial \mathcal{R}_{\beta ,\alpha }:=\left\{ {w=u+iv\in \mathbb{C}%
:u^{2}=\left( \beta {\sqrt{(u-1)^{2}+v^{2}}+\alpha }\right) ^{2},\;\beta
\geqslant 0\;}\text{and}{\;\alpha \in \left[ 0{,1}\right) }\right\} .
\label{eq5}
\end{equation} }
From elementary computations we see that (\ref{eq5}) represents conic
sections symmetric about the real axis. Thus $\mathcal{R}_{\beta ,\alpha }$
is an elliptic domain for $\beta >1,$ a parabolic domain for $\beta =1,$ a
hyperbolic domain for $0<\beta <1$ and the right half plane $u>\alpha $, for 
$\beta =0.$

The radius of $\beta -$uniform convexity of order $\alpha $ is defined by  { \small
\begin{equation*}  
r_{f}^{\beta -uc(\alpha )}=\sup \left\{ r\in (0,r_{f}):\ \Re \left\{ 1+\frac{%
z{f}^{\prime \prime }(z)}{{f}^{\prime }(z)}\right\} >\beta \left\vert \frac{z%
{f}^{\prime \prime }(z)}{{f}^{\prime }(z)}\right\vert +\alpha ,\text{ }{%
\beta \geqslant 0,\;\alpha \in \left[ 0{,1}\right) ,}\ \ z\in {U}(r)\right\}
.
\end{equation*} }

Our main aim to determine the radii of $\beta -$uniform convexity of order $%
\alpha $ of Lommel and Struve functions.

In order to prove the main results, we need the following lemma given in 
\cite{Deniz}.

\begin{lemma}
\label{L1} i. If $a>b>r\geq |z|,$ and $\lambda \in \lbrack 0,1],$ then 
\begin{equation}
\left\vert \frac{z}{b-z}-\lambda \frac{z}{a-z}\right\vert \leq \frac{r}{b-r}%
-\lambda \frac{r}{a-r}.  \label{2}
\end{equation}
The followings are very simple consequences of this inequality 
\begin{equation}
\Re\left( \frac{z}{b-z}-\lambda \frac{z}{a-z}\right) \leq \frac{r}{b-r}%
-\lambda \frac{r}{a-r}  \label{3}
\end{equation}%
and 
\begin{equation}
\Re\left( \frac{z}{b-z}\right) \leq \left\vert \frac{z}{b-z}\right\vert \leq 
\frac{r}{b-r}.  \label{4}
\end{equation}%
ii. If $b>a>r\geq |z|,$ then%
\begin{equation}
\left\vert \frac{1}{(a+z)(b-z)}\right\vert \leq \frac{1}{(a-r)(b+r)}.
\label{41}
\end{equation}
\end{lemma}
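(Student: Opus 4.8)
The plan is to treat the two parts separately, using a power-series argument for \textbf{i} and an extremal (maximum-modulus) argument for \textbf{ii}. For part \textbf{i}, since $|z|\leq r<b<a$ both terms admit convergent geometric expansions on $U(r)$, so that
\begin{equation*}
\frac{z}{b-z}-\lambda \frac{z}{a-z}=\sum_{n=1}^{\infty }c_{n}z^{n},\qquad c_{n}=\frac{1}{b^{n}}-\frac{\lambda }{a^{n}}.
\end{equation*}
The decisive observation is that every coefficient is nonnegative: because $a>b>0$ we have $a^{-n}<b^{-n}$, and since $\lambda \in \lbrack 0,1]$ this gives $\lambda a^{-n}\leq a^{-n}<b^{-n}$, hence $c_{n}>0$. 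With nonnegative coefficients the triangle inequality is attained along the positive real axis, so
\begin{equation*}
\left\vert \sum_{n=1}^{\infty }c_{n}z^{n}\right\vert \leq \sum_{n=1}^{\infty }c_{n}|z|^{n}\leq \sum_{n=1}^{\infty }c_{n}r^{n}=\frac{r}{b-r}-\lambda \frac{r}{a-r},
\end{equation*}
where the last resummation is just the two geometric series evaluated at $r$. This proves \eqref{2}. Then \eqref{3} is immediate from $\Re w\leq |w|$, and \eqref{4} is the special case $\lambda =0$, so these consequences cost no extra work.

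For part \textbf{ii} I would argue by locating the true minimum of $|(a+z)(b-z)|$ on $\overline{U(r)}$. Since $b>a>r$, the two poles $-a$ and $b$ of $h(z)=1/[(a+z)(b-z)]$ lie outside $\overline{U(r)}$, so $h$ is analytic there and, by the maximum modulus principle, $|h|$ attains its maximum on the circle $|z|=r$; equivalently, $|(a+z)(b-z)|$ is minimized on that circle. Writing $z=re^{i\phi }$ and $t=\cos \phi \in \lbrack -1,1]$, a short computation gives
\begin{equation*}
|(a+z)(b-z)|^{2}=\left( a^{2}+r^{2}+2art\right) \left( b^{2}+r^{2}-2brt\right) ,
\end{equation*}
which is a downward-opening quadratic in $t$ (its leading coefficient is $-4abr^{2}<0$) and hence attains its minimum over $[-1,1]$ at an endpoint. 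Comparing the endpoint values $(a-r)^{2}(b+r)^{2}$ at $t=-1$ with $(a+r)^{2}(b-r)^{2}$ at $t=1$, the hypothesis $b>a$ forces the former to be the smaller, so the minimum equals $(a-r)(b+r)$, attained at $z=-r$. Taking reciprocals yields \eqref{41}.

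I expect the genuine subtlety to lie entirely in part \textbf{ii}. The naive route of bounding each factor separately gives only $|b-z|\geq b-r$ and $|a+z|\geq a-r$, hence $\left\vert 1/[(a+z)(b-z)]\right\vert \leq 1/[(a-r)(b-r)]$, and this is \emph{weaker} than the asserted bound $1/[(a-r)(b+r)]$ since $b-r<b+r$. One therefore cannot estimate the factors independently; the extremal configuration places $z$ at $-r$, which simultaneously makes $|a+z|$ as small as possible while making $|b-z|$ as large as possible, and it is precisely the downward concavity of the quadratic in $t$ that certifies the extremum sits at this boundary endpoint rather than in the interior. Part \textbf{i}, by contrast, is routine once the positivity of the coefficients $c_{n}$ is noted.
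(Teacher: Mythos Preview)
Your proof is correct in both parts. The paper itself does not prove this lemma; it simply quotes it from \cite{Deniz}, so there is no ``paper's own proof'' to compare against. Your power-series argument for part \textbf{i} is clean and standard (nonnegative Taylor coefficients force the maximum modulus on a disk to occur at the positive real point), and your extremal analysis for part \textbf{ii} via the quadratic in $t=\cos\phi$ is the natural way to locate the minimum of $|(a+z)(b-z)|$ on the circle; the endpoint comparison $(a-r)(b+r)<(a+r)(b-r)$ under $b>a$ is exactly what pins the extremum at $z=-r$. Your closing remark about why factorwise estimation fails for \textbf{ii} is well taken and worth keeping.
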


\section{Main results}

In this paper our aim is to consider two classical special functions, the
Lommel function of the first kind $s_{\mu ,\nu }$ and the Struve function of
the first kind $\mathbf{H}_{\nu }.$ They are explicitly defined in terms of
the hypergeometric function $_{1}F_{2}$ by 
\begin{equation*}
s_{\mu ,\nu }(z)=\frac{z^{\mu +1}}{\left( \mu -\nu +1\right) \left( \mu +\nu
+1\right) }\text{ }_{1}F_{2}\left( 1;\frac{\left( \mu -\nu +3\right) }{2},%
\frac{\left( \mu +\nu +3\right) }{2};-\frac{z^{2}}{4}\right) ,
\end{equation*}%
$\frac{1}{2}\left( -\mu \pm \nu -3\right) \notin 
\mathbb{N}
$ and 
\begin{equation*}
\mathbf{H}_{\nu }(z)=\frac{\left( \frac{z}{2}\right) ^{\nu +1}}{\sqrt{\frac{%
\pi }{4}}\Gamma \left( \nu +\frac{3}{2}\right) }\text{ }_{1}F_{2}\left( 1;%
\frac{3}{2},\nu +\frac{3}{2};-\frac{z^{2}}{4}\right) ,\text{ \ \ }-\nu -%
\frac{3}{2}\notin 
\mathbb{N}
.
\end{equation*}%
Observe that 
\begin{equation*}
s_{\nu ,\nu }(z)=2^{\nu -1}\sqrt{\pi }\Gamma \left( \nu +\frac{1}{2}\right) 
\mathbf{H}_{\nu }(z).
\end{equation*}%
A common feature of these functions is that they are solutions of
inhomogeneous Bessel differantial equations \cite{watson}. Indeed, the
Lommel functions of the first kind $s_{\mu ,\nu }$ is a solution of 
\begin{equation*}
z^{2}w^{\prime \prime }\left( z\right) +zw^{\prime }\left( z\right) +\left(
z^{2}-\nu ^{2}\right) w\left( z\right) =z^{\mu +1}
\end{equation*}%
while the Struve function $\mathbf{H}_{\nu }$ obeys 
\begin{equation*}
z^{2}w^{\prime \prime }\left( z\right) +zw^{\prime }\left( z\right) +\left(
z^{2}-\nu ^{2}\right) w\left( z\right) =\frac{4\left( \frac{z}{2}\right)
^{\nu +1}}{\sqrt{\pi }\Gamma \left( \nu +\frac{1}{2}\right) }.
\end{equation*}%
We refer to Watson's treatise \cite{watson} for comprehensive information
about these functions and recall here briefly some contributions. In $1970$
Steinig \cite{Steinig70} investigated the real zeros of the Struve function $%
\mathbf{H}_{\nu },$ while in 1972 he \cite{Steinig72} examined the sign of $%
s_{\mu ,\nu }(z)$ for real $\mu ,\nu $ and positive $z.$ He showed, among
other things, that for $\mu <\frac{1}{2}$ the function $s_{\mu ,\nu }$ has
infinitely many changes of sign on $(0,\infty ).$ In $2012$ Koumandos and
Lamprecht \cite{Koumandos} obtained sharp estimates for the location of the
zeros of $s_{\mu -\frac{1}{2},\frac{1}{2}}$ when $\mu \in \left( 0,1\right)
. $ Turán type inequalities for $s_{\mu -\frac{1}{2},\frac{1}{2}}$ were
establised in \cite{Baricz1} while those for Struve function were proved in 
\cite{Baricz4}. Geometric properties of the Lommel function $s_{\mu -\frac{1%
}{2},\frac{1}{2}}$ and of the Struve function $\mathbf{H}_{\nu }$ were
obtained in \cite{Baricz2,Baricz3,Baricz5,Orhan,yagmur}.
Motivated by those results, in this paper we are interested on the radii of $%
\beta -$uniformly convex of order $\alpha $ of certain analytic functions
related to the classical special functions under discussion. Since neither $%
s_{\mu -\frac{1}{2},\frac{1}{2}},$ nor $\mathbf{H}_{\nu }$ belongs to the
class analytic functio\i ns, first we perform some natural normalizations,
as in \cite{Baricz3}. We define three functions related to $s_{\mu -\frac{1}{%
2},\frac{1}{2}}:$ 
\begin{equation*}
f_{\mu }(z)=f_{\mu -\frac{1}{2},\frac{1}{2}}(z)=\left( \mu \left( \mu
+1\right) s_{\mu -\frac{1}{2},\frac{1}{2}}(z)\right) ^{\frac{1}{\mu +\frac{1%
}{2}}},
\end{equation*}%
\begin{equation*}
g_{\mu }(z)=g_{\mu -\frac{1}{2},\frac{1}{2}}(z)=\mu \left( \mu +1\right)
z^{-\mu +\frac{1}{2}}s_{\mu -\frac{1}{2},\frac{1}{2}}(z),
\end{equation*}%
and

\begin{equation*}
h_{\mu }(z)=h_{\mu -\frac{1}{2},\frac{1}{2}}(z)=\mu \left( \mu +1\right) z^{%
\frac{3-2\mu }{4}}s_{\mu -\frac{1}{2},\frac{1}{2}}(\sqrt{z}).
\end{equation*}

Similarly, we associate with $\mathbf{H}_{\nu}$ the functions 
\begin{equation*}
u_{\nu}\left( z\right) =\left( \sqrt{\pi }2^{\nu}\text{ }\Gamma \left( \nu +%
\frac{3}{2}\right) \mathbf{H}_{\nu}\left( z\right) \right) ^{\frac{1}{\nu+1}%
},
\end{equation*}

\begin{equation*}
v_{\nu }\left( z\right) =\sqrt{\pi }2^{\nu }z^{-\nu }\text{ }\Gamma \left(
\nu +\frac{3}{2}\right) \mathbf{H}_{\nu }\left( z\right) ,
\end{equation*}%
and%
\begin{equation*}
w_{\nu }\left( z\right) =\sqrt{\pi }2^{\nu }z^{\frac{1-\nu }{2}}\text{ }%
\Gamma \left( \nu +\frac{3}{2}\right) \mathbf{H}_{\nu }\left( \sqrt{z}%
\right) .
\end{equation*}%
Clearly the functions $f_{\mu },$ $g_{\mu },$ $h_{\mu },$ $u_{\nu },$ $%
v_{\nu }$ and $w_{\nu }$ belong to the class of analytic functions $\mathcal{%
A}$. The main results in the present paper concern some interlacing
properties of the zeros of Lommel and Struve functions and derivatives, as
well as the exact values of the radii of $\beta -$uniform convexity of order 
$\alpha $ for these six functions, for some ranges of the parameters.

In the following, we present some lemmas given by Baricz and Ya\u gmur \cite{Baricz5}, on the zeros of derivatives of Lommel an Struve
functions of first kind. These lemmas is one of the key
tools in the proof of our main results.

\begin{lemma}
\label{lm21} The zeros of the Lommel function $s_{\mu -\frac{1}{2},\frac{1}{2%
}}$ and its derivative interlace when $\mu \in \left(-1,1\right),~\mu \neq
0. $ Moreover, the zeros $\xi_{\mu ,n}^{\prime}$ of the function $s_{\mu -%
\frac{1}{2},\frac{1}{2}}^{\prime}$ are all real and simple when $\mu \in
\left(-1,1\right),~\mu \neq 0.$
\end{lemma}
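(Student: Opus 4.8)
The plan is to strip off the algebraic prefactor and then study the logarithmic derivative on the positive real axis. Reading off the hypergeometric series one has
\[
s_{\mu-\frac12,\frac12}(z)=z^{\mu+\frac12}g(z),\qquad g(z)=\frac{1}{\mu(\mu+1)}\,{}_1F_2\!\left(1;\tfrac{\mu}{2}+1,\tfrac{\mu+3}{2};-\tfrac{z^2}{4}\right),
\]
where $g$ is a real, even entire function with $g(0)=1/(\mu(\mu+1))\neq0$ for $\mu\in(-1,1)\setminus\{0\}$. Differentiating gives $s_{\mu-\frac12,\frac12}^{\prime}(z)=z^{\mu-\frac12}G(z)$ with $G(z)=(\mu+\tfrac12)g(z)+zg'(z)$, again even and entire. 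Hence, away from the origin, the zeros of $s_{\mu-\frac12,\frac12}$ coincide with those of $g$ and the zeros of $s_{\mu-\frac12,\frac12}^{\prime}$ with those of $G$, and it suffices to prove that the positive zeros of $g$ and of $G$ interlace and that the latter are simple.

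Next I would record the reality of the zeros of $g$: for $\mu\in(-1,1)\setminus\{0\}$ the function $g$ has only real, simple zeros $\pm\xi_{\mu,n}$. This is the Lommel counterpart of the Laguerre--P\'olya property and can be read off from the Tur\'an-type inequalities for $s_{\mu-\frac12,\frac12}$ together with the growth of the ${}_1F_2$ series. Granting it, $g$ admits the Hadamard product $g(z)=g(0)\prod_{n\ge1}(1-z^2/\xi_{\mu,n}^2)$, which yields the Mittag--Leffler expansion
\[
q(x):=\frac{s_{\mu-\frac12,\frac12}^{\prime}(x)}{s_{\mu-\frac12,\frac12}(x)}=\frac{\mu+\frac12}{x}+\frac{g'(x)}{g(x)}=\frac{\mu+\frac12}{x}+\sum_{n\ge1}\frac{2x}{x^2-\xi_{\mu,n}^2},\qquad x>0.
\]

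The core of the argument is a monotonicity estimate. Differentiating term by term,
\[
q'(x)=-\frac{\mu+\frac12}{x^2}-\sum_{n\ge1}\frac{2\,(x^2+\xi_{\mu,n}^2)}{(x^2-\xi_{\mu,n}^2)^2}.
\]
When $\mu+\tfrac12>0$ every term is negative, so $q$ is strictly decreasing on $(0,\xi_{\mu,1})$ and on each $(\xi_{\mu,n},\xi_{\mu,n+1})$. Since $q$ tends to $+\infty$ at the left endpoint of each of these intervals and to $-\infty$ at the right endpoint, the equation $q=0$ has exactly one root there; each root is a zero of $s_{\mu-\frac12,\frac12}^{\prime}$, and it is simple because $s_{\mu-\frac12,\frac12}^{\prime\prime}(x_0)=s_{\mu-\frac12,\frac12}(x_0)\,q'(x_0)\neq0$ at such a point. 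This already gives strict interlacing and simplicity for $\mu\in(-\tfrac12,1)\setminus\{0\}$.

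The main obstacle is the range $\mu\in(-1,-\tfrac12]$, where $\mu+\tfrac12\le0$ reverses the sign of the first term of $q'$ and the elementary monotonicity argument breaks down. Here I would work with the even entire function $G$ directly. Writing $g(z)=h(z^2)$ with $h$ entire and having only positive simple zeros $\xi_{\mu,n}^2$, one finds $G(z)=H(z^2)$, where
\[
H(w)=2wh'(w)+(\mu+\tfrac12)h(w)=2\,w^{1-a}\,\frac{d}{dw}\!\left[w^{a}h(w)\right],\qquad a=\tfrac{2\mu+1}{4}.
\]
Thus the zeros of $s_{\mu-\frac12,\frac12}^{\prime}$ are governed by those of $H$, and the assertion amounts to showing that $H$ has only positive real zeros separating those of $h$. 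This is a Laguerre-type reality statement for the operator $h\mapsto 2wh'+(\mu+\tfrac12)h$ acting on a member of the Laguerre--P\'olya class with positive zeros; combined with Rolle's theorem (which already forces at least one zero of $s_{\mu-\frac12,\frac12}^{\prime}$ strictly between consecutive zeros of $s_{\mu-\frac12,\frac12}$) and a comparison of the zero-counting functions coming from the product representations, it would upgrade to exactly one zero per gap, hence strict interlacing and simplicity on all of $(-1,1)\setminus\{0\}$. Ruling out non-real zeros of $s_{\mu-\frac12,\frac12}^{\prime}$ when $\mu+\tfrac12\le0$ is the step I expect to require the most care, since the operator $w\mapsto$ (weighted derivative) need not preserve reality of zeros for arbitrary parameters, and the admissible range has to be pinned down from the specific gap structure of the $\xi_{\mu,n}$.
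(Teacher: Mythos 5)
First, note that the paper does not prove Lemma \ref{lm21} at all: it is imported verbatim from Baricz and Ya\u gmur \cite{Baricz5}, where the argument rests on the fact (from \cite{Baricz1}) that $z\mapsto \mu(\mu+1)z^{-\mu-\frac12}s_{\mu-\frac12,\frac12}(z)$ belongs to the Laguerre--P\'olya class for $\mu\in(0,1)$, with the range $\mu\in(-1,0)$ handled by the shift $\mu\mapsto\mu-1$ applied to $\varphi_1(z)=\mu(\mu-1)z^{-\mu+\frac12}s_{\mu-\frac32,\frac12}(z)$ --- the same device this paper reuses in the last step of the proof of Theorem \ref{thrm21}. So there is no in-paper proof to match yours against; your attempt must stand on its own.

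On its own terms, your proposal has a genuine gap. Your factorization $s_{\mu-\frac12,\frac12}=z^{\mu+\frac12}g$, the formula for $G$, the Mittag--Leffler expansion of $q=s'/s$, and the monotonicity computation are all correct, and they do give exactly one simple real zero of $s_{\mu-\frac12,\frac12}^{\prime}$ in each gap when $\mu+\frac12>0$. But two things are missing. (i) Even for $\mu\in(-\tfrac12,1)\setminus\{0\}$ you never exclude non-real zeros of $s_{\mu-\frac12,\frac12}^{\prime}$, which is half of the lemma's assertion; the monotonicity of $q$ on the real axis says nothing about $G$ off the axis. This is repairable by the standard computation $\Im\bigl(q(x+iy)\bigr)=-y\bigl[\tfrac{\mu+\frac12}{|z|^2}+\sum_{n\ge1}\bigl(|z-\xi_{\mu,n}|^{-2}+|z+\xi_{\mu,n}|^{-2}\bigr)\bigr]\neq0$ for $y\neq0$, but it needs to be said, and it again uses $\mu+\frac12\ge0$. (ii) For $\mu\in(-1,-\tfrac12]$ you explicitly concede that the elementary argument breaks down and offer only a plan: you reduce the claim to a ``Laguerre-type reality statement'' for the operator $h\mapsto 2wh'+(\mu+\tfrac12)h$ and then yourself observe that such operators need not preserve reality of zeros for arbitrary parameters. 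That is precisely the content of the lemma in this range, so nothing is proved there; Rolle plus zero-counting gives at least one zero per gap but neither exactly one nor reality of all zeros. The known proof closes this range not by analyzing the weighted-derivative operator directly but by the parameter shift to $\varphi_1$, which puts one back in the Laguerre--P\'olya setting where differentiation does preserve reality and interlacing. As written, your argument establishes the lemma only for $\mu\in(-\tfrac12,1)\setminus\{0\}$, and even there only the interlacing/simplicity half without the reality half.
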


\begin{lemma}
\label{lm22} The zeros of the function $\mathbf{H}_{\nu}$ and its derivative
interlace when $\left|\nu\right|\leq\frac{1}{2}.$ Moreover, the zeros $%
h_{\nu ,n}^{\prime}$ of the function $\mathbf{H}_{\nu}^{\prime}$ are all
real and simple when $\left|\nu\right|\leq\frac{1}{2}.$
\end{lemma}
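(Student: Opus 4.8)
The plan is to derive the interlacing from two ingredients: the reality of the zeros of $\mathbf{H}_{\nu}$ for $|\nu|\le\tfrac12$, and the strict monotonicity of the logarithmic derivative $\mathbf{H}_{\nu}^{\prime}/\mathbf{H}_{\nu}$ along the real axis. Writing $\mathbf{H}_{\nu}(z)=C\,z^{\nu+1}\phi_{\nu}(z)$, where
\[
\phi_{\nu}(z)={}_{1}F_{2}\!\left(1;\tfrac32,\nu+\tfrac32;-\tfrac{z^{2}}{4}\right)
\]
is an even entire function and $C$ a nonzero constant, the nonzero zeros of $\mathbf{H}_{\nu}$ are exactly the zeros of $\phi_{\nu}$. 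The first step is to establish that, for $|\nu|\le\tfrac12$, all of these zeros are real and simple. I would obtain this from Steinig's sign analysis of $\mathbf{H}_{\nu}$ on $(0,\infty)$ (already referenced in the paper), or self-containedly by realizing $\phi_{\nu}$ as a locally uniform limit of polynomials having only real zeros, i.e. by placing $\phi_{\nu}$ in the Laguerre--P\'olya class.

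Given reality of the zeros, the second step is the factorization. Since $\mathbf{H}_{\nu}$ has order one, the even function $\phi_{\nu}$, regarded as a function of $z^{2}$, has order $\tfrac12$, so its canonical product carries no exponential factor and
\[
\mathbf{H}_{\nu}(z)=C\,z^{\nu+1}\prod_{n\ge1}\left(1-\frac{z^{2}}{h_{\nu,n}^{2}}\right),
\]
where $\{\pm h_{\nu,n}\}$ are the real zeros. Logarithmic differentiation then gives the Mittag--Leffler expansion
\[
\frac{\mathbf{H}_{\nu}^{\prime}(z)}{\mathbf{H}_{\nu}(z)}=\frac{\nu+1}{z}-\sum_{n\ge1}\frac{2z}{h_{\nu,n}^{2}-z^{2}}.
\]
On the positive real axis, away from the poles, $\tfrac{d}{dx}\bigl(\tfrac{\nu+1}{x}\bigr)=-(\nu+1)x^{-2}<0$ (here $\nu+1>0$ is used, which holds since $\nu\ge-\tfrac12$) and each $\tfrac{d}{dx}\bigl(-\tfrac{2x}{h_{\nu,n}^{2}-x^{2}}\bigr)<0$, so the whole expression is strictly decreasing. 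It runs from $+\infty$ to $-\infty$ on each interval $(h_{\nu,n},h_{\nu,n+1})$ and on $(0,h_{\nu,1})$, hence vanishes exactly once in each. These simple real zeros are precisely the zeros $h_{\nu,n}^{\prime}$ of $\mathbf{H}_{\nu}^{\prime}$, and they are separated by the $h_{\nu,n}$, which is the asserted interlacing.

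The main obstacle is not the monotonicity, which is routine, but securing reality and completeness of the zero sets. Concretely I expect the work to concentrate on: (i) justifying that \emph{all} zeros of $\mathbf{H}_{\nu}$ are real for $|\nu|\le\tfrac12$ and that the product expansion is exact (no exponential factor), which is the Laguerre--P\'olya input; and (ii) confirming that differentiation produces no extra nonreal zeros, i.e. that the real zeros found above exhaust the zeros of $\mathbf{H}_{\nu}^{\prime}$. For (ii) I would compare the zero-counting function of $\mathbf{H}_{\nu}^{\prime}$, which has the same order as $\mathbf{H}_{\nu}$, with the count of real zeros already produced; since the latter already realizes the growth permitted by the order, a Hadamard--Jensen counting argument leaves no room for complex zeros. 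The same scheme proves Lemma \ref{lm21} for the Lommel function $s_{\mu-\frac12,\frac12}$, with $\nu+1$ replaced by the appropriate exponent and the range $\mu\in(-1,1)\setminus\{0\}$ playing the role of $|\nu|\le\tfrac12$.
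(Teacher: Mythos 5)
First, a point of reference: the paper does not prove this lemma at all --- it is quoted verbatim from Baricz and Ya\u gmur \cite{Baricz5}, so there is no ``paper's own proof'' to match against; your attempt has to be judged against the standard argument in that reference. Your overall architecture is the right one and is essentially the standard route: reality of the zeros of $\mathbf{H}_{\nu}$ as the external input, the Hadamard product $\mathbf{H}_{\nu}(z)=Cz^{\nu+1}\prod_{n\ge 1}(1-z^{2}/h_{\nu,n}^{2})$, the resulting Mittag--Leffler expansion of $\mathbf{H}_{\nu}^{\prime}/\mathbf{H}_{\nu}$, and the strict decrease of that quotient on each interval between consecutive positive zeros (using $\nu+1>0$, valid since $\nu\ge-\tfrac12$) to produce exactly one simple real zero of $\mathbf{H}_{\nu}^{\prime}$ per interval. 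That part is correct and complete, and it does deliver the interlacing and simplicity \emph{of the real zeros}.

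The genuine gap is your step (ii), the claim that $\mathbf{H}_{\nu}^{\prime}$ has no non-real zeros. The counting argument you sketch --- ``the real zeros already realize the growth permitted by the order, so Jensen leaves no room for complex zeros'' --- does not work as stated: order $1$ only gives $n(r)=O(r^{1+\varepsilon})$, and even knowing the exponential type, Jensen's formula bounds $\int_{0}^{R}n(t)\,dt/t$ by $\log M(R)$ with a multiplicative constant of slack, so a density of real zeros $\sim 2r/\pi$ is perfectly compatible, at the level of order and Jensen alone, with an additional positive density of complex zeros. Making this precise would require the exact asymptotics of the type of $\mathbf{H}_{\nu}^{\prime}$ together with completely-regular-growth machinery, which is far more than ``routine.'' The standard (and much shorter) fix, which is what \cite{Baricz5} and \cite{Baricz1} rely on, is to note that $\Phi_{\nu}(z)=\sqrt{\pi}\,2^{\nu}\Gamma(\nu+\tfrac32)z^{-\nu-1}\mathbf{H}_{\nu}(z)=\prod_{n\ge1}(1-z^{2}/h_{\nu,n}^{2})$ belongs to the Laguerre--P\'olya class, write
\begin{equation*}
z^{-\nu}\mathbf{H}_{\nu}^{\prime}(z)=C\bigl((\nu+1)\Phi_{\nu}(z)+z\Phi_{\nu}^{\prime}(z)\bigr),
\end{equation*}
and invoke Laguerre's theorem (see Levin \cite{Levin}) that for $f$ in the Laguerre--P\'olya class and $\alpha>0$ the function $\alpha f(z)+zf^{\prime}(z)$ again has only real zeros, which interlace with those of $f$. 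This single classical fact closes (ii) and yields the interlacing simultaneously; I would replace your counting argument with it. The same repair is needed for your closing remark about Lemma \ref{lm21}, where $\varphi_{0}$ being in the Laguerre--P\'olya class plays exactly this role.
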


Now, the first main result of this section presents the radii of $\beta -$%
uniform convexity of order $\alpha $ of functions $f_{\mu },g_{\mu }\text{
and }h_{\mu }.$

\begin{theorem}
\label{thrm21} Let $\mu \in \left( -1,1\right) ,~\mu \neq 0$ and suppose
that $\mu \neq -\frac{1}{2}.$ Then the radius of $\beta -$uniform convexity
of order $\alpha $ of the function $f_{\mu }$ is the smallest positive root
of the equation 
\begin{equation*}
(1-\alpha)+(1+\beta )\left( \frac{rs_{\mu -\frac{1}{2},\frac{1}{2}}^{\prime \prime
}(r)}{s_{\mu -\frac{1}{2},\frac{1}{2}}^{\prime }(r)}+\left( \frac{1}{\mu +%
\frac{1}{2}}-1\right) \frac{rs_{\mu -\frac{1}{2},\frac{1}{2}}^{\prime }(r)}{%
s_{\mu -\frac{1}{2},\frac{1}{2}}(r)}\right) =0.
\end{equation*}%
Moreover $r_{f_{\mu }}^{\beta -uc(\alpha )}<r_{f_{\mu }}^{c}<\xi _{\mu
,1}^{\prime }<\xi _{\mu ,1},$ where $\xi _{\mu ,1}$ and $\xi _{\mu
,1}^{\prime }$ denote the first positive zeros of $s_{\mu -\frac{1}{2},\frac{%
1}{2}}$ and $s_{\mu -\frac{1}{2},\frac{1}{2}}^{\prime },$ respectively and $%
r_{f_{\mu }}^{c}$ is the radius of convexity of the function $f_{\mu }.$
\end{theorem}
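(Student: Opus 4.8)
The plan is to reduce the $\beta$-uniform convexity condition $(\ref{eq2})$ to a statement about where the quantity $w(z)=1+zf_\mu''(z)/f_\mu'(z)$ lands in the conic domain $\mathcal{R}_{\beta,\alpha}$, and then to exploit the Mittag-Leffler (infinite product/sum) expansion of the Lommel function together with Lemma \ref{L1} and the interlacing property from Lemma \ref{lm21}.

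Let me think about how to execute this.

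First, I would compute $1 + zf_\mu''(z)/f_\mu'(z)$ explicitly. Since $f_\mu(z) = (\mu(\mu+1)s_{\mu-1/2,1/2}(z))^{1/(\mu+1/2)}$, taking logarithmic derivatives gives a clean formula: writing $s = s_{\mu-1/2,1/2}$, one gets
$$1+\frac{zf_\mu''(z)}{f_\mu'(z)} = 1 + \frac{zs''(z)}{s'(z)} + \left(\frac{1}{\mu+1/2}-1\right)\frac{zs'(z)}{s(z)}.$$
This matches exactly the left side of the target equation (with the constant pulled out), so the equation in the theorem is precisely $\Re(1+zf_\mu''/f_\mu') = \beta|zf_\mu''/f_\mu'|+\alpha$ evaluated at the boundary on the positive real axis, where both sides are real.

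Second, using the Mittag-Leffler expansions for $s$ and $s'$ in terms of their zeros $\xi_{\mu,n}$ and $\xi'_{\mu,n}$, I would write $zs'(z)/s(z)$ and $zs''(z)/s'(z)$ as sums of the form $\sum z/(z^2-\xi_n^2)$ type terms, i.e. combinations of $z/(\xi_n - z)$ pieces. This is where Lemma \ref{L1} enters: inequality $(\ref{4})$ bounds $\Re$ and modulus of each such term by its value at $z=r$ on the real axis, and crucially these bounds are attained simultaneously when $z=r$ is real and positive. The interlacing Lemma \ref{lm21} guarantees $\xi'_{\mu,1}<\xi_{\mu,1}$ and that the zeros are real and simple, which is what makes the termwise inequalities of Lemma \ref{L1} applicable with the correct ordering of poles.

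The key inequality I would establish is that for $|z|=r < \xi'_{\mu,1}$,
$$\Re\left(1+\frac{zf_\mu''(z)}{f_\mu'(z)}\right) - \beta\left|\frac{zf_\mu''(z)}{f_\mu'(z)}\right| \geq 1+\frac{rf_\mu''(r)}{f_\mu'(r)} - \beta\left|\frac{rf_\mu''(r)}{f_\mu'(r)}\right|,$$
so that the infimum over the disk is attained on the positive real axis. Then $f_\mu\in\beta\text{-}UC(\alpha)$ on $U(r)$ holds iff the real-axis expression exceeds $\alpha$, and the radius $r_{f_\mu}^{\beta-uc(\alpha)}$ is the smallest $r$ where equality occurs, i.e. the smallest positive root of the displayed equation. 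I would verify that the left-hand expression is positive and decreasing in $r$ on $(0,\xi'_{\mu,1})$ and tends to $-\infty$ as $r\uparrow\xi'_{\mu,1}$ (since $s'(r)\to 0$), guaranteeing a unique such root below $\xi'_{\mu,1}$. The chain $r_{f_\mu}^{\beta-uc(\alpha)}<r_{f_\mu}^c<\xi'_{\mu,1}<\xi_{\mu,1}$ then follows: the middle inequality $r^c<\xi'_{\mu,1}$ comes from the $\beta=0,\alpha=0$ case of convexity breaking down exactly at the first zero of the derivative expression, and $r^{\beta-uc(\alpha)}<r^c$ because enforcing the stronger conic condition $(\ref{eq2})$ shrinks the admissible radius whenever $\beta>0$ or $\alpha>0$.

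The main obstacle I expect is justifying the termwise application of Lemma \ref{L1} to the modulus $|zf_\mu''/f_\mu'|$: the lemma controls each summand, but the triangle inequality must be combined carefully so that the real part bound and the modulus bound are saturated at the \emph{same} point $z=r$. The sign of the coefficient $1/(\mu+1/2)-1$ changes with $\mu$, so I would split into the cases $\mu\in(-1/2,1)\setminus\{0\}$ and $\mu\in(-1,-1/2)$ and check that the relevant $\lambda\in[0,1]$ hypothesis of Lemma \ref{L1} holds in each regime; this is precisely why the hypothesis $\mu\neq-\frac12$ is imposed.
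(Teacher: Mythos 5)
Your overall strategy coincides with the paper's: the same logarithmic-derivative identity for $1+zf_{\mu }''(z)/f_{\mu }'(z)$, the same reduction via the Mittag--Leffler expansions to sums of terms $\tfrac{2z^{2}}{\xi _{\mu ,n}^{2}-z^{2}}$ and $\tfrac{2z^{2}}{\xi _{\mu ,n}^{\prime 2}-z^{2}}$, the same termwise use of Lemma \ref{L1}, and the same endgame (infimum attained at $z=r$, then strict monotonicity of $r\mapsto 1-\alpha +(1+\beta )rf_{\mu }''(r)/f_{\mu }'(r)$ with limits $1$ and $-\infty $). However, there is a genuine gap in your case analysis. You propose to split according to the sign of $\tfrac{1}{\mu +\frac{1}{2}}-1$ into $\mu \in (-\tfrac{1}{2},1)\setminus \{0\}$ and $\mu \in (-1,-\tfrac{1}{2})$ and to ``check that the relevant $\lambda \in [0,1]$ hypothesis of Lemma \ref{L1} holds in each regime.'' That check fails: for $\mu \in (-1,-\tfrac{1}{2})$ one has $\mu +\tfrac{1}{2}\in (-\tfrac{1}{2},0)$, hence $\tfrac{1}{\mu +\frac{1}{2}}<-2$ and $\lambda =1-\tfrac{1}{\mu +\frac{1}{2}}>3$, which violates the hypothesis $\lambda \in [0,1]$ of inequalities (\ref{2})--(\ref{3}); nor is the coefficient $\tfrac{1}{\mu +\frac{1}{2}}-1$ nonnegative there, so the plain triangle-inequality variant is also unavailable. (Incidentally, the exclusion $\mu \neq -\tfrac{1}{2}$ is imposed because the exponent $\tfrac{1}{\mu +\frac{1}{2}}$ is undefined, not because of the $\lambda $ range.)

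The paper's proof handles exactly this difficulty by a separate device that your plan does not contain: for $\mu \in (-1,0)$ it does not work with the zeros $\xi _{\mu ,n}$, $\xi _{\mu ,n}^{\prime }$ at all, but shifts the parameter, introducing $\varphi _{1}(z)=\mu (\mu -1)z^{-\mu +\frac{1}{2}}s_{\mu -\frac{3}{2},\frac{1}{2}}(z)$, invokes its membership in the Laguerre--P\'olya class (via the Laguerre inequality) to obtain reality and interlacing of the zeros $\zeta _{\mu ,n}$ and $\zeta _{\mu ,n}^{\prime }$ of $\varphi _{1}$ and $\varphi _{1}^{\prime }$, reruns the estimates (\ref{eq28})--(\ref{eq29}) with these zeros, and only then replaces $\mu $ by $\mu +1$ to conclude for $\mu \in (-1,0)$. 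Without this (or some substitute establishing the needed reality/interlacing and a usable termwise bound for the negative range), your argument only proves the theorem for $\mu \in (0,1)$, and in fact not even for all of $(-\tfrac{1}{2},1)$ by the route you describe, since the positive-coefficient case of your split should be $\tfrac{1}{\mu +\frac{1}{2}}-1\geq 0$, i.e.\ $\mu \in (-\tfrac{1}{2},\tfrac{1}{2}]$, with the $\lambda $-case covering $\mu \in (\tfrac{1}{2},1)$; the interval $(-1,-\tfrac{1}{2})$ is left uncovered. You should also note that the strict inequality $r_{f_{\mu }}^{\beta -uc(\alpha )}<r_{f_{\mu }}^{c}$ requires $\beta >0$ or $\alpha >0$; for $\beta =\alpha =0$ the two radii coincide.
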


\begin{proof}
In \cite{Baricz5}, authors proved the Mittag-Leffler expansions of $s_{\mu -\frac{1}{2},\frac{1}{2}}(z)$ and $s_{\mu -\frac{1}{2},\frac{1}{%
2}}^{\prime }(z)$ as follows:
\begin{equation}\label{yeq1}
s_{\mu -\frac{1}{2},\frac{1}{2}}(z)=\frac{z^{\mu+\frac{1}{2}}}{\mu (\mu+1)}\prod_{n\geq 1}\left(1-\frac{z^2}{\xi _{\mu ,n}^2}\right)
\end{equation}
 and 
\begin{equation}\label{yeq2}s_{\mu -\frac{1}{2},\frac{1}{%
2}}^{\prime }(z)= \frac{\left(\mu+\frac{1}{2}\right)z^{\mu-\frac{1}{2}}}{\mu (\mu+1)}\prod_{n\geq 1}\left(1-\frac{z^2}{\xi _{\mu ,n}^{\prime 2}}\right)\end{equation} 
where $\xi _{\mu ,n}$ and $\xi _{\mu ,n}^{\prime }$ denote the $n$-th positive
roots of $s_{\mu -\frac{1}{2},\frac{1}{2}}$ and $s_{\mu -\frac{1}{2},\frac{1}{%
2}}^{\prime },$ respectively.
Observe also that 
\begin{equation}\label{yeq3} 
1+\frac{zf_{\mu }^{\prime \prime }(z)}{f_{\mu }^{\prime }(z)}=1+\frac{zs_{\mu -\frac{1}{2},\frac{1}{2}}^{\prime \prime
}(z)}{s_{\mu -\frac{1}{2},\frac{1}{2}}^{\prime }(z)}+\left( \frac{1}{\mu +%
\frac{1}{2}}-1\right) \frac{zs_{\mu -\frac{1}{2},\frac{1}{2}}^{\prime }(z)}{%
s_{\mu -\frac{1}{2},\frac{1}{2}}(z)}.
\end{equation} 
Thus, from (\ref{yeq1}), (\ref{yeq2}) and (\ref{yeq3}), we have
\begin{equation*}
1+\frac{zf_{\mu }^{\prime \prime }(z)}{f_{\mu }^{\prime }(z)}=1-\left( \frac{%
1}{\mu +\frac{1}{2}}-1\right) \sum_{n\geq 1}\frac{2z^{2}}{\xi _{\mu
,n}^{2}-z^{2}}-\sum_{n\geq 1}\frac{2z^{2}}{\xi _{\mu ,n}^{\prime 2}-z^{2}}.
\end{equation*}%
Now, the proof will be presented in three cases by considering the intervals
of $\mu $.\newline
Firstly, suppose that $\mu \in \left( 0,\frac{1}{2}\right] .$ Since $\frac{1%
}{\mu +\frac{1}{2}}-1\geq 0,$ inequality (\ref{4}) implies for $\left\vert
z\right\vert \leq r<\xi _{\mu ,1}^{\prime }<\xi _{\mu ,1}$ 
\begin{eqnarray}
\Re \left( 1+\frac{zf_{\mu }^{\prime \prime }(z)}{f_{\mu }^{\prime }(z)}%
\right) &=&1-\sum_{n\geq 1}\Re \left( \frac{2z^{2}}{\xi _{\mu ,n}^{\prime
2}-z^{2}}\right) -\left( \frac{1}{\mu +\frac{1}{2}}-1\right) \sum_{n\geq
1}\Re \left( \frac{2z^{2}}{\xi _{\mu ,n}^{2}-z^{2}}\right)  \label{eq21} \\
&\geq &1-\sum_{n\geq 1}\frac{2r^{2}}{\xi _{\mu ,n}^{\prime 2}-r^{2}}-\left( 
\frac{1}{\mu +\frac{1}{2}}-1\right) \sum_{n\geq 1}\frac{2r^{2}}{\xi _{\mu
,n}^{2}-r^{2}}  \notag \\
&=&1+\frac{rf_{\mu }^{\prime \prime }(r)}{f_{\mu }^{\prime }(r)}.  \notag
\end{eqnarray}%
On the other hand, if in the second part of inequality (\ref{4}) we replace $%
z$ by $z^{2}$ and $b$ by $\xi _{\mu ,n}^{\prime }$ and $\xi _{\mu ,n}$,
respectively, then it follows that 
\begin{equation*}
\left\vert \frac{2z^{2}}{\xi _{\mu ,n}^{\prime 2}-z^{2}}\right\vert \leq 
\frac{2r^{2}}{\xi _{\mu ,n}^{\prime 2}-r^{2}}\text{ and }\left\vert \frac{%
2z^{2}}{\xi _{\mu ,n}^{2}-z^{2}}\right\vert \leq \frac{2r^{2}}{\xi _{\mu
,n}^{2}-r^{2}}
\end{equation*}%
provided that $\left\vert z\right\vert \leq r<\xi _{\mu ,1}^{\prime }<\xi
_{\mu ,1}.$ These two inequalities and the conditions $\frac{1}{\mu +\frac{1%
}{2}}-1\geq 0$ and $\beta \geq 0$, imply that

\begin{eqnarray}
\beta \left\vert \frac{zf_{\mu }^{\prime \prime }(z)}{f_{\mu }^{\prime }(z)}%
\right\vert &=&\beta \left\vert \sum_{n\geq 1}\left( \frac{2z^{2}}{\xi _{\mu
,n}^{\prime 2}-z^{2}}+\left( \frac{1}{\mu +\frac{1}{2}}-1\right) \frac{2z^{2}%
}{\xi _{\mu ,n}^{2}-z^{2}}\right) \right\vert  \label{eq22} \\
&\leq &\beta \sum_{n\geq 1}\left\vert \frac{2z^{2}}{\xi _{\mu ,n}^{\prime
2}-z^{2}}\right\vert +\beta \left( \frac{1}{\mu +\frac{1}{2}}-1\right)
\sum_{n\geq 1}\left\vert \frac{2z^{2}}{\xi _{\mu ,n}^{2}-z^{2}}\right\vert 
\notag \\
&\leq &\beta \sum_{n\geq 1}\left( \frac{2r^{2}}{\xi _{\mu ,n}^{\prime
2}-r^{2}}+\left( \frac{1}{\mu +\frac{1}{2}}-1\right) \frac{2r^{2}}{\xi _{\mu
,n}^{2}-r^{2}}\right) =-\beta \frac{rf_{\mu }^{\prime \prime }(r)}{f_{\mu
}^{\prime }(r)}.  \notag
\end{eqnarray}%
From (\ref{eq21}) and (\ref{eq22}) we infer 
\begin{equation}
\Re \left( 1+\frac{zf_{\mu }^{\prime \prime }(z)}{f_{\mu }^{\prime }(z)}%
\right) -\beta \left\vert \frac{zf_{\mu }^{\prime \prime }(z)}{f_{\mu
}^{\prime }(z)}\right\vert -\alpha \geq 1-\alpha +(1+\beta )\frac{rf_{\mu
}^{\prime \prime }(r)}{f_{\mu }^{\prime }(r)},  \label{eq23}
\end{equation}
where $\left\vert z\right\vert \leq
r<\xi _{\mu ,1}^{\prime }\text{ and }{\alpha \in \left[ 0{,1}\right) ,}\text{
}\beta \geq 0. $ \\
In the second step we will prove that inequalities (\ref{eq21}) and (\ref%
{eq22}) hold in the case $\mu \in \left( \frac{1}{2},1\right),$ too. Indeed
in the case $\mu \in \left( \frac{1}{2},1\right) $ the roots $0<\xi _{\mu
,n}^{\prime }<\xi _{\mu ,n}$ are real for every natural number $n$.
Moreover, inequality (\ref{4}) implies that 
\begin{equation*}
\Re \left( \frac{2z^{2}}{\xi _{\mu ,n}^{\prime 2}-z^{2}}\right) \leq
\left\vert \frac{2z^{2}}{\xi _{\mu ,n}^{\prime 2}-z^{2}}\right\vert \leq 
\frac{2r^{2}}{\xi _{\mu ,n}^{\prime 2}-r^{2}},~~\left\vert z\right\vert \leq
r<\xi _{\mu ,1}^{\prime }<\xi _{\mu ,1}
\end{equation*}%
and 
\begin{equation*}
\Re \left( \frac{2z^{2}}{\xi _{\mu ,n}^{2}-z^{2}}\right) \leq \left\vert 
\frac{2z^{2}}{\xi _{\mu ,n}^{2}-z^{2}}\right\vert \leq \frac{2r^{2}}{\xi
_{\mu ,n}^{2}-r^{2}},~~\left\vert z\right\vert \leq r<\xi _{\mu ,1}^{\prime
}<\xi _{\mu ,1}.
\end{equation*}%
Putting $\lambda =1-\frac{1}{\mu +\frac{1}{2}}$ inequality (\ref{3}) implies 
\begin{equation*}
\Re \left( \frac{2z^{2}}{\xi _{\mu ,n}^{\prime 2}-z^{2}}-\left( 1-\frac{1}{%
\mu +\frac{1}{2}}\right) \frac{2z^{2}}{\xi _{\mu ,n}^{2}-z^{2}}\right) \leq 
\frac{2r^{2}}{\xi _{\mu ,n}^{\prime 2}-r^{2}}-\left( 1-\frac{1}{\mu +\frac{1%
}{2}}\right) \frac{2r^{2}}{\xi _{\mu ,n}^{2}-r^{2}},
\end{equation*}%
for $\left\vert z\right\vert \leq r<\xi _{\mu ,1}^{\prime }<\xi _{\mu ,1},$
and we get 
\begin{eqnarray}
\Re \left( 1+\frac{zf_{\mu }^{\prime \prime }(z)}{f_{\mu }^{\prime }(z)}%
\right) &=&1-\sum_{n\geq 1}\Re \left( \frac{2z^{2}}{\xi _{\mu ,n}^{\prime
2}-z^{2}}-\left( 1-\frac{1}{\mu +\frac{1}{2}}\right) \frac{2z^{2}}{\xi _{\mu
,n}^{2}-z^{2}}\right)  \label{eq24} \\
&\geq &1-\sum_{n\geq 1}\left( \frac{2r^{2}}{\xi _{\mu ,n}^{\prime 2}-r^{2}}%
-\left( 1-\frac{1}{\mu +\frac{1}{2}}\right) \frac{2r^{2}}{\xi _{\mu
,n}^{2}-r^{2}}\right)  \notag \\
&=&1+\frac{rf_{\mu }^{\prime \prime }(r)}{f_{\mu }^{\prime }(r)}.  \notag
\end{eqnarray}%
Now, if in the inequality (\ref{2}) we replace $z$ by $z^{2}$ and $b$ by $%
\xi _{\mu ,n}^{\prime }$ and $\xi _{\mu ,n}$ we again put $\lambda =1-\frac{1%
}{\mu +\frac{1}{2}},$ it follows that 
\begin{equation*}
\left\vert \frac{2z^{2}}{\xi _{\mu ,n}^{\prime 2}-z^{2}}-\left( 1-\frac{1}{%
\mu +\frac{1}{2}}\right) \frac{2z^{2}}{\xi _{\mu ,n}^{2}-z^{2}}\right\vert
\leq \frac{2r^{2}}{\xi _{\mu ,n}^{\prime 2}-r^{2}}-\left( 1-\frac{1}{\mu +%
\frac{1}{2}}\right) \frac{2r^{2}}{\xi _{\mu ,n}^{2}-r^{2}},
\end{equation*}%
provided that $\left\vert z\right\vert \leq r<\xi _{\mu ,1}^{\prime }<\xi
_{\mu ,1}.$ Thus, for $\beta \geq 0$ we obtain 
\begin{eqnarray}
\beta \left\vert \frac{zf_{\mu }^{\prime \prime }(z)}{f_{\mu }^{\prime }(z)}%
\right\vert &=&\beta \left\vert \sum_{n\geq 1}\left( \frac{2z^{2}}{\xi _{\mu
,n}^{\prime 2}-z^{2}}-\left( 1-\frac{1}{\mu +\frac{1}{2}}\right) \frac{2z^{2}%
}{\xi _{\mu ,n}^{2}-z^{2}}\right) \right\vert  \label{eq25} \\
&\leq &\beta \sum_{n\geq 1}\left\vert \frac{2z^{2}}{\xi _{\mu ,n}^{\prime
2}-z^{2}}-\left( 1-\frac{1}{\mu +\frac{1}{2}}\right) \frac{2z^{2}}{\xi _{\mu
,n}^{2}-z^{2}}\right\vert  \notag \\
&\leq &\beta \sum_{n\geq 1}\left( \frac{2r^{2}}{\xi _{\mu ,n}^{\prime
2}-r^{2}}-\left( 1-\frac{1}{\mu +\frac{1}{2}}\right) \frac{2r^{2}}{\xi _{\mu
,n}^{2}-r^{2}}\right) =-\beta \frac{rf_{\mu }^{\prime \prime }(r)}{f_{\mu
}^{\prime }(r)}.  \notag
\end{eqnarray}%
Finally the following inequality be infered from (\ref{eq24}) and (\ref{eq25}%
) for $\mu \in \left( \frac{1}{2},1\right) $ 
\begin{equation}
\Re \left( 1+\frac{zf_{\mu }^{\prime \prime }(z)}{f_{\mu }^{\prime }(z)}%
\right) -\beta \left\vert \frac{zf_{\mu }^{\prime \prime }(z)}{f_{\mu
}^{\prime }(z)}\right\vert -\alpha \geq 1-\alpha +(1+\beta )\frac{rf_{\mu
}^{\prime \prime }(r)}{f_{\mu }^{\prime }(r)},~~  \label{eq26}
\end{equation}
where $\left\vert z\right\vert \leq
r<\xi _{\mu ,1}^{\prime }\text{ and }{\alpha \in \left[ 0{,1}\right) ,}\text{
}\beta \geq 0. $ \\
Equality holds (\ref{eq24}) if and only if $z=r.$ Thus it follows that 
\begin{equation*}
\inf_{\left\vert z\right\vert <r}\left[ \Re \left( 1+\frac{zf_{\mu }^{\prime
\prime }(z)}{f_{\mu }^{\prime }(z)}\right) -\beta \left\vert \frac{zf_{\mu
}^{\prime \prime }(z)}{f_{\mu }^{\prime }(z)}\right\vert -\alpha \right]
=1-\alpha +(1+\beta )\frac{rf_{\mu }^{\prime \prime }(r)}{f_{\mu }^{\prime
}(r)},
\end{equation*}%
for $r\in (0,\xi _{\mu ,1}^{\prime }),~{\alpha \in \left[ 0{,1}%
\right) ,}\text{ }\beta \geq 0 \text{ and } \mu \in (0,1).$ \\
The mapping $\psi _{\mu }:(0,\xi _{\mu ,1}^{\prime })\rightarrow \mathbb{R}$
defined by 
\begin{equation*}
\psi _{\mu }(r)=1-\alpha +(1+\beta )\frac{rf_{\mu }^{\prime \prime }(r)}{%
f_{\mu }^{\prime }(r)}=1-\alpha -(1+\beta )\sum_{n\geq 1}\left( \frac{2r^{2}%
}{\xi _{\mu ,n}^{\prime 2}-r^{2}}-\left( 1-\frac{1}{\mu +\frac{1}{2}}\right) 
\frac{2r^{2}}{\xi _{\mu ,n}^{2}-r^{2}}\right)
\end{equation*}%
is strictly decreasing for all $\mu \in (0,1)$ and ${\alpha \in \left[ 0{,1}%
\right) ,}$ $\beta \geq 0$. Namely, we obtain 
\begin{eqnarray*}
\psi _{\mu }^{\prime }(r) &=&-(1+\beta )\sum_{n\geq 1}\left( \frac{4r^{2}\xi
_{\mu ,n}^{\prime 2}}{\left( \xi _{\mu ,n}^{\prime 2}-r^{2}\right) ^{2}}%
-\left( 1-\frac{1}{\mu +\frac{1}{2}}\right) \frac{4r^{2}\xi _{\mu ,n}^{2}}{%
\left( \xi _{\mu ,n}^{2}-r^{2}\right) ^{2}}\right) \\
&<&(1+\beta )\sum_{n\geq 1}\left( \frac{4r^{2}\xi _{\mu ,n}^{2}}{\left( \xi
_{\mu ,n}^{2}-r^{2}\right) ^{2}}-\frac{4r^{2}\xi _{\mu ,n}^{\prime 2}}{%
\left( \xi _{\mu ,n}^{\prime 2}-r^{2}\right) ^{2}}\right) <0
\end{eqnarray*}%
for $\mu \in (\frac{1}{2},1),~r\in (0,\xi _{\mu ,1}^{\prime })\text{ and }%
\beta \geq 0.$ Here we used again that the zeros $\xi _{\mu ,n}$ and $\xi
_{\mu ,n}^{\prime }$ interlace, and for all $n\in \mathbb{N},~\mu \in (0,1)$
and $r<\sqrt{\xi _{\mu ,n}\xi _{\mu ,n}^{\prime }}$ we have that 
\begin{equation*}
\xi _{\mu ,n}^{2}\left( \xi _{\mu ,n}^{\prime 2}-r^{2}\right) ^{2}<\xi _{\mu
,n}^{\prime 2}\left( \xi _{\mu ,n}^{2}-r^{2}\right) ^{2}.
\end{equation*}%
Let now $\mu \in \left( 0,\frac{1}{2}\right] $ and $r>0.$ Thus the following
inequality%
\begin{equation*}
\psi _{\mu }^{\prime }(r)=-(1+\beta )\sum_{n\geq 1}\left( \frac{4r^{2}\xi
_{\mu ,n}^{\prime 2}}{\left( \xi _{\mu ,n}^{\prime 2}-r^{2}\right) ^{2}}%
-\left( 1-\frac{1}{\mu +\frac{1}{2}}\right) \frac{4r^{2}\xi _{\mu ,n}^{2}}{%
\left( \xi _{\mu ,n}^{2}-r^{2}\right) ^{2}}\right) <0
\end{equation*}
is satisfy and thus $\psi _{\mu }$ is indeed strictly decreasing for all $%
\mu \in (0,1)$ and $\beta \geq 0$.\newline
Now, since $\lim_{r\searrow 0}\psi _{\mu }(r)=1$ and $\lim_{r\nearrow \xi
_{\mu ,1}^{\prime }}\psi _{\mu }(r)=-\infty $, in wiew of the minimum
principle for harmonic functions it follows that for $\mu \in (0,1)$ and $%
z\in U(r_{f_{\mu }}^{\beta -uc(\alpha )})$ we have 
\begin{equation}
\Re \left( 1+\frac{zf_{\mu }^{\prime \prime }(z)}{f_{\mu }^{\prime }(z)}%
\right) -\beta \left\vert \frac{zf_{\mu }^{\prime \prime }(z)}{f_{\mu
}^{\prime }(z)}\right\vert -\alpha >0  \label{eq27}
\end{equation}%
if and only if $r_{f_{\mu }}^{\beta -uc(\alpha )}$ is the unique root of 
\begin{equation}
1+(1+\beta )\frac{rf_{\mu }^{\prime \prime }(r)}{f_{\mu }^{\prime }(r)}%
=\alpha ,~~\alpha \in \lbrack 0,1)\text{ and }\beta \geq 0.  \label{eq271}
\end{equation}

In the final step we will proved that inequality (\ref{eq27}) also holds
when $\mu \in (-1,0).$\newline
In order to this, suppose that $\mu \in (0,1)$ and since (see \cite{Baricz1}%
) the function $z\rightarrow \varphi _{0}(z)=\mu (\mu +1)z^{-\mu -\frac{1}{2}%
}s_{\mu -\frac{1}{2},\frac{1}{2}}(z)$ belongs to the Laguerre-P\' olya
class of entire functions, it satisfies the Laguerre inequality 
\begin{equation*}
\left( \varphi _{0}^{(n)}(z)\right) ^{2}-\left( \varphi
_{0}^{(n-1)}(z)\right) \left( \varphi _{0}^{(n+1)}(z)\right) >0,
\end{equation*}%
where $\mu \in (0,1)$ and $z\in \mathbb{R}$.\newline
Substituting $\mu $ by $\mu -1$, $\varphi _{0}$ by the function $\varphi
_{1}(z)=\mu (\mu -1)z^{-\mu +\frac{1}{2}}s_{\mu -\frac{3}{2},\frac{1}{2}}(z)$
and taking into account that the $n$th positive zeros of $\varphi _{1}$ and $%
\varphi _{1}^{\prime },$ denoted by $\zeta _{\mu ,n}$ and $\zeta _{\mu
,n}^{\prime }$, interlace, since $\varphi _{1}$ belongs also to the
Laguerre-P\' olya class of entire functions (see \cite{Baricz1}). It is
worth mentioning that

\begin{equation}
\Re \left( 1+\frac{zf_{\mu -1}^{\prime \prime }(z)}{f_{\mu -1}^{\prime }(z)}%
\right) \geq 1-\sum_{n\geq 1}\frac{2r^{2}}{\zeta _{\mu ,n}^{\prime 2}-r^{2}}%
-\left( \frac{1}{\mu -\frac{1}{2}}-1\right) \sum_{n\geq 1}\frac{2r^{2}}{%
\zeta _{\mu ,n}^{2}-r^{2}}  \label{eq28}
\end{equation}%
and 
\begin{equation}
\beta \left\vert \frac{zf_{\mu -1}^{\prime \prime }(z)}{f_{\mu -1}^{\prime
}(z)}\right\vert \leq \beta \sum_{n\geq 1}\left( \frac{2r^{2}}{\zeta _{\mu
,n}^{\prime 2}-r^{2}}+\left( \frac{1}{\mu -\frac{1}{2}}-1\right) \frac{2r^{2}%
}{\zeta _{\mu ,n}^{2}-r^{2}}\right)  \label{eq29}
\end{equation}%
holds for $\mu \in (0,1),~\beta \geq 0$ and $\mu \neq \frac{1}{2}$. In this
case we use again minimum principle for harmonic functions to ensure that (%
\ref{eq27}) is valid for $\mu -1$ instead of $\mu .$ Consequently, replacing $%
\mu $ by $\mu +1,$ the equation (\ref{eq271}) is satisfy for $\mu \in
(-1,0), $ $\alpha \in \lbrack 0,1)$ and $\beta \geq 0.$ Thus the proof is
complete.
\end{proof}

As a result of the Theorem \ref{thrm21}, the following corollary is obtained by taking $\alpha=0$ ve $\beta=1$.

\begin{corollary}\label{snc21}
Let $\mu \in \left( -1,1\right) ,~\mu \neq 0$ and suppose
that $\mu \neq -\frac{1}{2}.$ Then the radius of uniform convexity
of the function $f_{\mu }$ is the smallest positive root
of the equation 
\begin{equation*}
1+2\left( \frac{rs_{\mu -\frac{1}{2},\frac{1}{2}}^{\prime \prime
}(r)}{s_{\mu -\frac{1}{2},\frac{1}{2}}^{\prime }(r)}+\left( \frac{1}{\mu +%
\frac{1}{2}}-1\right) \frac{rs_{\mu -\frac{1}{2},\frac{1}{2}}^{\prime }(r)}{%
s_{\mu -\frac{1}{2},\frac{1}{2}}(r)}\right) =0.
\end{equation*}%
Moreover $r_{f_{\mu }}^{uc}<r_{f_{\mu }}^{c}<\xi _{\mu
,1}^{\prime }<\xi _{\mu ,1},$ where $\xi _{\mu ,1}$ and $\xi _{\mu
,1}^{\prime }$ denote the first positive zeros of $s_{\mu -\frac{1}{2},\frac{%
1}{2}}$ and $s_{\mu -\frac{1}{2},\frac{1}{2}}^{\prime },$ respectively and $%
r_{f_{\mu }}^{c}$ is the radius of convexity of the function $f_{\mu }.$
\end{corollary}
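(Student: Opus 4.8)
The plan is to obtain this as an immediate specialization of Theorem~\ref{thrm21}. Recall from the discussion following~(\ref{eq2}) that the class $1-UC(0)$ is precisely the class $UC$ of uniformly convex functions of Goodman and R\o nning. Hence the radius of uniform convexity of $f_{\mu}$ coincides with its radius of $\beta-$uniform convexity of order $\alpha$ for the particular choice $\beta=1$ and $\alpha=0$, that is $r_{f_{\mu}}^{uc}=r_{f_{\mu}}^{1-uc(0)}$. No new geometric analysis is therefore required; the entire content of the corollary is a substitution into the characterization already proved in the general case.

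First I would apply Theorem~\ref{thrm21}, which identifies $r_{f_{\mu}}^{\beta-uc(\alpha)}$ as the smallest positive root of its defining equation, valid for $\mu\in(-1,1)$ with $\mu\neq0$ and $\mu\neq-\frac{1}{2}$, and for all $\alpha\in[0,1)$, $\beta\geq0$. Setting $\alpha=0$ replaces the constant term $1-\alpha$ by $1$, and setting $\beta=1$ replaces the prefactor $1+\beta$ by $2$. These two elementary substitutions turn the theorem's equation into exactly the equation displayed in the corollary, whose smallest positive root is therefore the desired radius of uniform convexity.

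Finally, the ordering $r_{f_{\mu}}^{uc}<r_{f_{\mu}}^{c}<\xi_{\mu,1}^{\prime}<\xi_{\mu,1}$ follows from the analogous chain in Theorem~\ref{thrm21} under the same specialization: the three quantities $r_{f_{\mu}}^{c}$, $\xi_{\mu,1}^{\prime}$ and $\xi_{\mu,1}$ are independent of $\alpha$ and $\beta$, while the strict inequality $r_{f_{\mu}}^{\beta-uc(\alpha)}<r_{f_{\mu}}^{c}$ holds for every admissible pair $(\alpha,\beta)$ and in particular for $(0,1)$. The only thing to verify is thus the arithmetic of the substitution, $1-\alpha=1$ and $1+\beta=2$; there is no genuine obstacle, since the hard analytic work---the Mittag-Leffler expansions, the interlacing of the zeros, and the strict monotonicity of $\psi_{\mu}$---has already been carried out in the proof of Theorem~\ref{thrm21}.
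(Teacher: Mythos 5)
Your proposal is correct and matches the paper exactly: the paper derives this corollary by the same direct substitution $\alpha=0$, $\beta=1$ into Theorem~\ref{thrm21}, using the identification $1-UC(0)=UC$. Nothing further is required.
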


\begin{center}
\includegraphics{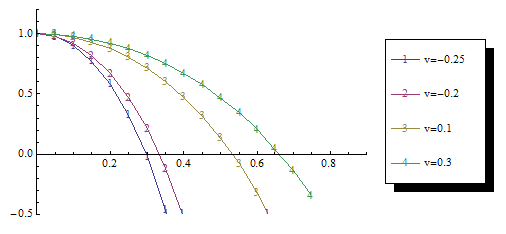}

The graph of the fuction $r\mapsto 1+2\left( \frac{rs_{\mu -\frac{1}{2},\frac{1}{2}}^{\prime \prime
}(r)}{s_{\mu -\frac{1}{2},\frac{1}{2}}^{\prime }(r)}+\left( \frac{1}{\mu +%
\frac{1}{2}}-1\right) \frac{rs_{\mu -\frac{1}{2},\frac{1}{2}}^{\prime }(r)}{%
s_{\mu -\frac{1}{2},\frac{1}{2}}(r)}\right) $ \vskip 0.3 cm

for $\mu \in \{-0.25, -0.2, 0.1, 0.3\}$ on $[0,0.9]$
\end{center}

\begin{theorem}
\label{thrm22} Let $\mu \in \left( -1,1\right) ,~\mu \neq 0$ and suppose
that $\mu \neq -\frac{1}{2}.$ Then the radius of $\beta -$uniform convexity
of order $\alpha $ of the function $g_{\mu }$ is the smallest positive root
of the equation 
\begin{equation*}
(1-\alpha)-(1+\beta )\left( \frac{1}{2}+\mu -r\frac{(\frac{3}{^{2}}-\mu )s_{\mu -%
\frac{1}{2},\frac{1}{2}}^{\prime }(r)+rs_{\mu -\frac{1}{2},\frac{1}{2}%
}^{\prime \prime }(r)}{(\frac{1}{^{2}}-\mu )s_{\mu -\frac{1}{2},\frac{1}{2}%
}(r)+rs_{\mu -\frac{1}{2},\frac{1}{2}}^{\prime }(r)}\right) =0.
\end{equation*}%
Moreover $r_{g_{\mu }}^{\beta -uc(\alpha )}<r_{g_{\mu }}^{c}<\gamma _{\mu
,1}^{\prime }<\xi _{\mu ,1},$ where $\xi _{\mu ,1}$ and $\gamma _{\mu ,1}$
denote the first positive zeros of $s_{\mu -\frac{1}{2},\frac{1}{2}}$ and $%
g_{\mu }^{\prime },$ respectively.
\end{theorem}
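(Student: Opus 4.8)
The plan is to run the same machinery as in the proof of Theorem \ref{thrm21}, while exploiting a decisive simplification special to $g_\mu$. First I would record the Hadamard product for $g_\mu$: combining the factorization (\ref{yeq1}) with the definition of $g_\mu$ gives
\[
g_\mu(z)=\mu(\mu+1)z^{-\mu+\frac12}s_{\mu-\frac12,\frac12}(z)=z\prod_{n\geq 1}\left(1-\frac{z^2}{\xi_{\mu,n}^2}\right),
\]
so $g_\mu$ is odd and $g_\mu'$ is even with $g_\mu'(0)=1$. Since $z\mapsto\mu(\mu+1)z^{-\mu-\frac12}s_{\mu-\frac12,\frac12}(z)$ belongs to the Laguerre--P\'olya class (see \cite{Baricz1}), so does the product $g_\mu(z)=z\cdot\mu(\mu+1)z^{-\mu-\frac12}s_{\mu-\frac12,\frac12}(z)$, and this class is closed under differentiation. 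Hence $g_\mu'$ has only real zeros $\pm\gamma_{\mu,n}$, the zeros of $g_\mu$ and $g_\mu'$ interlace, and
\[
g_\mu'(z)=\prod_{n\geq 1}\left(1-\frac{z^2}{\gamma_{\mu,n}^2}\right),\qquad 1+\frac{zg_\mu''(z)}{g_\mu'(z)}=1-\sum_{n\geq 1}\frac{2z^2}{\gamma_{\mu,n}^2-z^2}.
\]
A logarithmic differentiation of $g_\mu$ then identifies this last quantity with
\[
\frac12-\mu+\frac{z\left(\left(\frac32-\mu\right)s_{\mu-\frac12,\frac12}'(z)+zs_{\mu-\frac12,\frac12}''(z)\right)}{\left(\frac12-\mu\right)s_{\mu-\frac12,\frac12}(z)+zs_{\mu-\frac12,\frac12}'(z)},
\]
which is exactly the expression appearing in the statement once it is fed into $1-\alpha+(1+\beta)rg_\mu''(r)/g_\mu'(r)$.

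The decisive point is that, unlike in Theorem \ref{thrm21}, only a single Mittag--Leffler sum is present, over the zeros $\gamma_{\mu,n}$ of $g_\mu'$; consequently no splitting of the $\mu$-range is required and the sign difficulty caused by the coefficient $\frac{1}{\mu+1/2}-1$ disappears. For $|z|\le r<\gamma_{\mu,1}$ I would apply inequality (\ref{4}) of Lemma \ref{L1} termwise, after replacing $z$ by $z^2$ and $b$ by $\gamma_{\mu,n}^2$, to obtain $\Re\left(2z^2/(\gamma_{\mu,n}^2-z^2)\right)\le\left|2z^2/(\gamma_{\mu,n}^2-z^2)\right|\le 2r^2/(\gamma_{\mu,n}^2-r^2)$. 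Summing over $n$ yields at once
\[
\Re\left(1+\frac{zg_\mu''(z)}{g_\mu'(z)}\right)\ge 1+\frac{rg_\mu''(r)}{g_\mu'(r)},\qquad \beta\left|\frac{zg_\mu''(z)}{g_\mu'(z)}\right|\le -\beta\frac{rg_\mu''(r)}{g_\mu'(r)},
\]
and hence, with $\psi_\mu(r)=1-\alpha+(1+\beta)rg_\mu''(r)/g_\mu'(r)=1-\alpha-(1+\beta)\sum_{n\geq 1}2r^2/(\gamma_{\mu,n}^2-r^2)$, the bound $\Re(1+zg_\mu''/g_\mu')-\beta|zg_\mu''/g_\mu'|-\alpha\ge\psi_\mu(r)$ holds with equality at $z=r$.

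Each summand $2r^2/(\gamma_{\mu,n}^2-r^2)$ is strictly increasing on $(0,\gamma_{\mu,1})$, so $\psi_\mu$ is immediately seen to be strictly decreasing there, with $\psi_\mu(0^+)=1-\alpha>0$ and $\psi_\mu(\gamma_{\mu,1}^-)=-\infty$; thus $\psi_\mu$ has a unique root $r^\star\in(0,\gamma_{\mu,1})$. By the minimum principle for harmonic functions, exactly as in Theorem \ref{thrm21}, the defining inequality (\ref{eq2}) then holds throughout $U(r)$ if and only if $r\le r^\star$, so $r_{g_\mu}^{\beta-uc(\alpha)}=r^\star$ is the smallest positive root of the displayed equation. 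For the chain $r_{g_\mu}^{\beta-uc(\alpha)}<r_{g_\mu}^c<\gamma_{\mu,1}<\xi_{\mu,1}$ I would note that $1+rg_\mu''(r)/g_\mu'(r)=1-\sum_{n\geq 1}2r^2/(\gamma_{\mu,n}^2-r^2)$ decreases strictly from $1$ to $-\infty$ on $(0,\gamma_{\mu,1})$, so its zero $r_{g_\mu}^c$ lies below $\gamma_{\mu,1}$, while at $r^\star$ it equals the nonnegative number $(\alpha+\beta)/(1+\beta)$, placing $r^\star$ before $r_{g_\mu}^c$; the last inequality $\gamma_{\mu,1}<\xi_{\mu,1}$ is the interlacing of the zeros of $g_\mu'$ and $g_\mu$. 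Finally, the range $\mu\in(-1,0)$ is treated precisely as in Theorem \ref{thrm21}, through the substitution $\mu\mapsto\mu+1$ and the Laguerre inequality for the shifted function $\varphi_1$. I expect the only genuine obstacle to be the justification of the reality of the zeros of $g_\mu'$ and of the product representation, i.e.\ the Laguerre--P\'olya membership of $g_\mu$; once that is secured, the single-sum structure makes the monotonicity of $\psi_\mu$ and the rest of the argument essentially automatic.
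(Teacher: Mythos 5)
Your proposal is correct and follows essentially the same route as the paper: the single Mittag--Leffler sum $1+zg_\mu''(z)/g_\mu'(z)=1-\sum_{n\geq 1}2z^2/(\gamma_{\mu,n}^2-z^2)$ (which the paper imports from Baricz--Ya\u gmur via Hadamard's theorem, and which you re-justify through Laguerre--P\'olya membership), the termwise application of inequality (\ref{4}), the strict monotonicity of $\psi_\mu$, and the minimum principle for harmonic functions. Your added remarks on the chain $r_{g_\mu}^{\beta-uc(\alpha)}<r_{g_\mu}^{c}<\gamma_{\mu,1}<\xi_{\mu,1}$ and the observation that no case-splitting in $\mu$ is needed are consistent with (and slightly more explicit than) the paper's argument.
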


\begin{proof}
Let $\xi _{\mu ,n}$ and $\gamma _{\mu ,n}$ denote the $n$-th positive root
of $s_{\mu -\frac{1}{2},\frac{1}{2}}$ and $g_{\mu }^{\prime },$ respectively
and the smallest positive root of $g_{\mu }^{\prime }$ does not exceed the
first positive root of $s_{\mu -\frac{1}{2},\frac{1}{2}}$. In \cite{Baricz5}
with the help of Hadamard's Theorem \cite[p.26]{Levin}, the following
equality was proved: 
\begin{equation*}
1+\frac{zg_{\mu }^{\prime \prime }(z)}{g_{\mu }^{\prime }(z)}=1-\sum_{n\geq
1}\frac{2z^{2}}{\gamma _{\mu ,n}^{2}-z^{2}}.
\end{equation*}%
By using inequality (\ref{4}), for all $z\in U(\gamma _{\mu ,1})$ we have
the inequality 
\begin{equation}
\Re \left( 1+\frac{zg_{\mu }^{\prime \prime }(z)}{g_{\mu }^{\prime }(z)}%
\right) \geq 1-\sum_{n\geq 1}\frac{2r^{2}}{\gamma _{\mu ,n}^{2}-r^{2}}
\label{eq210}
\end{equation}%
where $\left\vert z\right\vert =r.$\newline
On the other hand, again by using inequality (\ref{4}), for all $z\in
U(\gamma _{\mu ,1})\text{ and }\beta \geq 0$ we get the inequality 
\begin{eqnarray}
\beta \left\vert \frac{zg_{\mu }^{\prime \prime }(z)}{g_{\mu }^{\prime }(z)}%
\right\vert &=&\beta \left\vert \sum_{n\geq 1}\frac{2z^{2}}{\gamma _{\mu
,n}^{2}-z^{2}}\right\vert  \label{eq211} \\
&\leq &\beta \sum_{n\geq 1}\left\vert \frac{2z^{2}}{\gamma _{\mu
,n}^{2}-z^{2}}\right\vert  \notag \\
&\leq &\beta \sum_{n\geq 1}\frac{2r^{2}}{\gamma _{\mu ,n}^{2}-r^{2}}=-\beta 
\frac{rg_{\mu }^{\prime \prime }(r)}{g_{\mu }^{\prime }(r)}.  \notag
\end{eqnarray}%
Finally the following inequality be infered from (\ref{eq210}) and (\ref%
{eq211}) 
\begin{equation}
\Re \left( 1+\frac{zg_{\mu }^{\prime \prime }(z)}{g_{\mu }^{\prime }(z)}%
\right) -\beta \left\vert \frac{zg_{\mu }^{\prime \prime }(z)}{g_{\mu
}^{\prime }(z)}\right\vert -\alpha \geq 1-\alpha +(1+\beta )\frac{rg_{\mu
}^{\prime \prime }(r)}{g_{\mu }^{\prime }(r)},~~\beta \geq 0,\alpha \in
\lbrack 0,1),  \label{eq212}
\end{equation}%
where $\left\vert z\right\vert =r.$ Thus, for $r\in (0,\gamma _{\mu
,1}),~\beta \geq 0\text{ and }\alpha \in \lbrack 0,1)$ we obtain 
\begin{equation*}
\inf_{\left\vert z\right\vert <r}\left[ \Re \left( 1+\frac{zg_{\mu }^{\prime
\prime }(z)}{g_{\mu }^{\prime }(z)}\right) -\beta \left\vert \frac{zg_{\mu
}^{\prime \prime }(z)}{g_{\mu }^{\prime }(z)}\right\vert -\alpha \right]
=1-\alpha +(1+\beta )\frac{rg_{\mu }^{\prime \prime }(r)}{g_{\mu }^{\prime
}(r)}.
\end{equation*}%
The mapping $\Theta _{\mu }:(0,\gamma _{\mu ,1})\rightarrow \mathbb{R}$
defined by 
\begin{equation*}
\Theta _{\mu }(r)=1-\alpha +(1+\beta )\frac{rg_{\mu }^{\prime \prime }(r)}{%
g_{\mu }^{\prime }(r)}=1-\alpha -(1+\beta )\sum_{n\geq 1}\frac{2r^{2}}{%
\gamma _{\mu ,n}^{2}-r^{2}}
\end{equation*}%
is strictly decreasing since $\lim_{r\searrow 0}\Theta _{\mu }(r)=1$ and $%
\lim_{r\nearrow \gamma _{\mu ,1}}\Theta _{\mu }(r)=-\infty $. As a result in
wiew of the minimum principle for harmonic functions it follows that for $%
\alpha \in \lbrack 0,1),~\beta \geq 0$ and $z\in U(r_{1})$ we have 
\begin{equation*}
\Re \left( 1+\frac{zg_{\mu }^{\prime \prime }(z)}{g_{\mu }^{\prime }(z)}%
\right) -\beta \left\vert \frac{zg_{\mu }^{\prime \prime }(z)}{g_{\mu
}^{\prime }(z)}\right\vert -\alpha >0
\end{equation*}%
if and only if $r_{1}$ is the unique root of 
\begin{equation*}
1+(1+\beta )\frac{rg_{\mu }^{\prime \prime }(r)}{g_{\mu }^{\prime }(r)}%
=\alpha ,~~\alpha \in \lbrack 0,1)\text{ and }\beta \geq 0
\end{equation*}%
situated in $(0,\gamma _{\mu ,1})$.
\end{proof}

As a result of the Theorem \ref{thrm22}, the next corollary is obtained by taking $\alpha=0$ ve $\beta=1$.

\begin{corollary}\label{snc22}
Let $\mu \in \left( -1,1\right) ,~\mu \neq 0$ and suppose
that $\mu \neq -\frac{1}{2}.$ Then the radius of uniform convexity
of the function $g_{\mu }$ is the smallest positive root
of the equation 
\begin{equation*}
1-2\left( \frac{1}{2}+\mu -r\frac{(\frac{3}{^{2}}-\mu )s_{\mu -%
\frac{1}{2},\frac{1}{2}}^{\prime }(r)+rs_{\mu -\frac{1}{2},\frac{1}{2}%
}^{\prime \prime }(r)}{(\frac{1}{^{2}}-\mu )s_{\mu -\frac{1}{2},\frac{1}{2}%
}(r)+rs_{\mu -\frac{1}{2},\frac{1}{2}}^{\prime }(r)}\right) =0.
\end{equation*}%
Moreover $r_{g_{\mu }}^{uc}<r_{g_{\mu }}^{c}<\gamma _{\mu
,1}^{\prime }<\xi _{\mu ,1},$ where $\xi _{\mu ,1}$ and $\gamma _{\mu ,1}$
denote the first positive zeros of $s_{\mu -\frac{1}{2},\frac{1}{2}}$ and $%
g_{\mu }^{\prime },$ respectively.
\end{corollary}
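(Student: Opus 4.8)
The plan is to derive this corollary directly from Theorem \ref{thrm22} by specializing the two parameters, so the proof is a short one. First I would recall the identification recorded in the introduction after (\ref{eq4}), namely that $1-UC(0)=UC$: a function is uniformly convex precisely when it is $\beta$-uniformly convex of order $\alpha$ in the instance $\beta=1$, $\alpha=0$. Consequently $r_{g_\mu}^{uc}=r_{g_\mu}^{1-uc(0)}$, and the entire characterization of $r_{g_\mu}^{\beta-uc(\alpha)}$ supplied by Theorem \ref{thrm22} applies verbatim once we put $\beta=1$ and $\alpha=0$. This is the one place deserving a moment's care, and it is already guaranteed by the geometric-interpretation paragraph of the introduction.

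Next I would carry out the substitution in the defining equation of Theorem \ref{thrm22}. The prefactor $(1-\alpha)$ becomes $1$ and the prefactor $(1+\beta)$ becomes $2$, while the quotient built from $s_{\mu-\frac{1}{2},\frac{1}{2}}$, $s_{\mu-\frac{1}{2},\frac{1}{2}}^{\prime}$ and $s_{\mu-\frac{1}{2},\frac{1}{2}}^{\prime\prime}$ is left untouched; this turns the equation of the theorem into exactly the equation displayed in the corollary. Equivalently, the boundary root equation $1+(1+\beta)\frac{rg_\mu^{\prime\prime}(r)}{g_\mu^{\prime}(r)}=\alpha$ obtained inside the proof of Theorem \ref{thrm22} collapses to $1+2\frac{rg_\mu^{\prime\prime}(r)}{g_\mu^{\prime}(r)}=0$, which is the defining threshold of R\o nning's uniform-convexity criterion (\ref{d1}) in Theorem \ref{T1}. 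Since the hypotheses on $\mu$ are identical, and the interlacing input (Lemma \ref{lm21}) together with the Hadamard factorization used for $g_\mu^{\prime}$ is parameter-free, no part of the monotonicity argument for $\Theta_\mu$ needs to be re-run; the smallest positive root continues to lie in $(0,\gamma_{\mu,1})$ and equals $r_{g_\mu}^{uc}$.

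Finally, for the ordering $r_{g_\mu}^{uc}<r_{g_\mu}^{c}<\gamma_{\mu,1}^{\prime}<\xi_{\mu,1}$, I would again invoke the \emph{moreover} clause of Theorem \ref{thrm22} at $\beta=1$, $\alpha=0$, using $r_{g_\mu}^{uc}=r_{g_\mu}^{1-uc(0)}$. There is essentially no obstacle in this corollary: its whole content is a parameter specialization, and once the identification $UC=1-UC(0)$ is in hand, both the root equation and the chain of radii follow immediately from the parent theorem.
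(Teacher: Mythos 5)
Your proposal is correct and follows exactly the route the paper intends: the paper derives Corollary \ref{snc22} simply by setting $\alpha=0$ and $\beta=1$ in Theorem \ref{thrm22}, using the identification $1-UC(0)=UC$ from the introduction. Your elaboration of the substitution and of the \emph{moreover} clause is a faithful (if more detailed) version of the same argument.
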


\begin{center}
\includegraphics{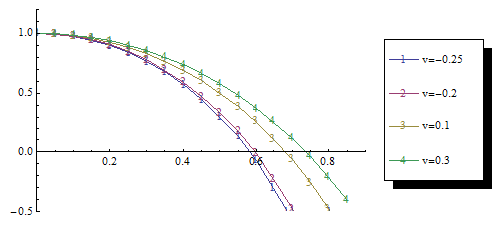}

The graph of the fuction $r\mapsto 1-2\left( \frac{1}{2}+\mu -r\frac{(\frac{3}{^{2}}-\mu )s_{\mu -%
\frac{1}{2},\frac{1}{2}}^{\prime }(r)+rs_{\mu -\frac{1}{2},\frac{1}{2}%
}^{\prime \prime }(r)}{(\frac{1}{^{2}}-\mu )s_{\mu -\frac{1}{2},\frac{1}{2}%
}(r)+rs_{\mu -\frac{1}{2},\frac{1}{2}}^{\prime }(r)}\right) $  \vskip 0.3 cm

for $\mu \in \{-0.25, -0.2, 0.1, 0.3\}$ on $[0,0.9]$
\end{center}

\begin{theorem}
\label{thrm23} Let $\mu \in \left( -1,1\right) ,~\mu \neq 0$ and suppose
that $\mu \neq -\frac{1}{2}.$ Then the radius of $\beta -$uniform convexity
of order $\alpha $ of the function $h_{\mu }$ is the smallest positive root
of the equation 
\begin{equation*}
4(1-\alpha)-(1+\beta )\left( 1+2\mu -2\sqrt{r}\frac{(\frac{5}{^{2}}-\mu )s_{\mu -\frac{%
1}{2},\frac{1}{2}}^{\prime }(\sqrt{r})+\sqrt{r}s_{\mu -\frac{1}{2},\frac{1}{2%
}}^{\prime \prime }(\sqrt{r})}{(\frac{3}{^{2}}-\mu )s_{\mu -\frac{1}{2},%
\frac{1}{2}}(\sqrt{r})+\sqrt{r}s_{\mu -\frac{1}{2},\frac{1}{2}}^{\prime }(%
\sqrt{r})}\right) =0.
\end{equation*}%
Moreover $r_{h_{\mu }}^{\beta -uc(\alpha )}<r_{h_{\mu }}^{c}<\delta _{\mu
,1}^{\prime }<\xi _{\mu ,1},$ where $\xi _{\mu ,1}$ and $\delta _{\mu ,1}$
denote the first positive zeros of $s_{\mu -\frac{1}{2},\frac{1}{2}}$ and $%
h_{\mu }^{\prime },$ respectively.
\end{theorem}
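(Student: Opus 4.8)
The plan is to follow verbatim the scheme of the proof of Theorem \ref{thrm22}, because $h_\mu$ produces a \emph{single} Mittag-Leffler sum and therefore avoids the case analysis forced on Theorem \ref{thrm21} by two interlacing families of zeros. Writing $s=s_{\mu-\frac{1}{2},\frac{1}{2}}$ and $c=\frac{3-2\mu}{4}$, logarithmic differentiation of $h_\mu(z)=\mu(\mu+1)z^{c}s(\sqrt z)$ gives
\[
h_\mu'(z)=\mu(\mu+1)z^{c-1}\left(c\,s(\sqrt z)+\frac{\sqrt z}{2}s'(\sqrt z)\right),
\]
so the nonzero zeros of $h_\mu'$ are the squares $\delta_{\mu,n}$ of the positive zeros of $\phi(w)=c\,s(w)+\frac{w}{2}s'(w)$. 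First I would record, citing \cite{Baricz5} and Hadamard's factorization exactly as in Theorem \ref{thrm22}, that $\phi$ has only real zeros and that
\[
1+\frac{zh_\mu''(z)}{h_\mu'(z)}=1-\sum_{n\ge1}\frac{z}{\delta_{\mu,n}-z}.
\]
The single power of $z$ in each denominator (rather than $z^2$) is the structural feature that makes this the cleanest of the three cases; a short computation, using $c+\frac{\mu+\frac{1}{2}}{2}=1$, verifies that putting $z=r$ and multiplying by $4$ reproduces the left-hand side of the asserted radius equation.

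With the expansion in hand the analytic estimates are immediate, and no substitution $z\mapsto z^{2}$ is needed. For $|z|\le r<\delta_{\mu,1}$, inequality (\ref{4}) of Lemma \ref{L1} applied directly with $b=\delta_{\mu,n}$ yields
\[
\Re\left(1+\frac{zh_\mu''(z)}{h_\mu'(z)}\right)\ge1-\sum_{n\ge1}\frac{r}{\delta_{\mu,n}-r}=1+\frac{rh_\mu''(r)}{h_\mu'(r)},
\]
while the triangle inequality together with $\beta\ge0$ gives
\[
\beta\left|\frac{zh_\mu''(z)}{h_\mu'(z)}\right|\le\beta\sum_{n\ge1}\frac{r}{\delta_{\mu,n}-r}=-\beta\frac{rh_\mu''(r)}{h_\mu'(r)}.
\]
Combining these two estimates and subtracting $\alpha$ produces
\[
\Re\left(1+\frac{zh_\mu''(z)}{h_\mu'(z)}\right)-\beta\left|\frac{zh_\mu''(z)}{h_\mu'(z)}\right|-\alpha\ge1-\alpha+(1+\beta)\frac{rh_\mu''(r)}{h_\mu'(r)},
\]
with equality attained at $z=r$, so the infimum of the left-hand side over $|z|<r$ equals the right-hand side.

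It then remains to study $\Xi_\mu(r)=1-\alpha-(1+\beta)\sum_{n\ge1}\frac{r}{\delta_{\mu,n}-r}$ on $(0,\delta_{\mu,1})$. Each summand $r\mapsto\frac{r}{\delta_{\mu,n}-r}$ is strictly increasing, so $\Xi_\mu$ is strictly decreasing; moreover $\lim_{r\searrow0}\Xi_\mu(r)=1-\alpha>0$ and $\lim_{r\nearrow\delta_{\mu,1}}\Xi_\mu(r)=-\infty$. Hence $\Xi_\mu$ has a unique zero in $(0,\delta_{\mu,1})$, and the minimum principle for harmonic functions shows, as in Theorem \ref{thrm22}, that this zero is exactly $r_{h_\mu}^{\beta-uc(\alpha)}$; rewriting $1+(1+\beta)\frac{rh_\mu''(r)}{h_\mu'(r)}=\alpha$ through the formula for $h_\mu'$ gives the stated equation. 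Finally the chain $r_{h_\mu}^{\beta-uc(\alpha)}<r_{h_\mu}^{c}<\delta_{\mu,1}'<\xi_{\mu,1}$ follows by specialising to $\alpha=\beta=0$ (which recovers ordinary convexity, a weaker requirement) and invoking the interlacing of the zeros of $h_\mu$ and $h_\mu'$.

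The main obstacle I expect is not the estimation but the justification of the expansion itself: one must know that every zero of $h_\mu'$, equivalently of $\phi$, is real and positive for the whole range $\mu\in(-1,1)\setminus\{0\}$, $\mu\neq-\frac{1}{2}$. For $\mu\in(0,1)$ this is supplied by the Laguerre--P\'olya membership and interlacing results of \cite{Baricz1,Baricz5}; for $\mu\in(-1,0)$ I would import the device used at the end of the proof of Theorem \ref{thrm21}, shifting $\mu$ to $\mu-1$ and exploiting the Laguerre inequality to transfer the conclusion. Once reality of the zeros is secured, the rest is a line-by-line transcription of the $g_\mu$ argument with $z^{2}$ and $\gamma_{\mu,n}^{2}$ replaced by $z$ and $\delta_{\mu,n}$.
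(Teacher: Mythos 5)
Your proposal is correct and follows essentially the same route as the paper: the single Mittag--Leffler expansion $1+zh_{\mu }^{\prime \prime }(z)/h_{\mu }^{\prime }(z)=1-\sum_{n\geq 1}z/(\delta _{\mu ,n}^{2}-z)$ from \cite{Baricz5}, the direct application of inequality (\ref{4}) without the $z\mapsto z^{2}$ substitution, the infimum attained at $z=r$, the monotonicity of the resulting function of $r$, and the minimum principle for harmonic functions. Your added care about the reality of the zeros of $h_{\mu }^{\prime }$ over the full parameter range and the explicit algebraic check linking the expansion to the stated equation are refinements of, not departures from, the paper's argument.
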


\begin{proof}
Let $\xi _{\mu ,n}$ and $\delta _{\mu ,n}$ denote the $n$-th positive root
of $s_{\mu -\frac{1}{2},\frac{1}{2}}$ and $h_{\mu }^{\prime },$ respectively
and the smallest positive root of $h_{\mu }^{\prime }$ does not exceed the
first positive root of $s_{\mu -\frac{1}{2},\frac{1}{2}}$. In \cite{Baricz5}
with the help of Hadamard's Theorem \cite[p.26]{Levin}, the following
equality was proved: 
\begin{equation*}
1+\frac{zh_{\mu }^{\prime \prime }(z)}{h_{\mu }^{\prime }(z)}=1-\sum_{n\geq
1}\frac{z}{\delta _{\mu ,n}^{2}-z}.
\end{equation*}%
By using inequality (\ref{4}), for all $z\in U(\delta _{\mu ,1})$ we obtain
the inequality 
\begin{equation}
\Re \left( 1+\frac{zh_{\mu }^{\prime \prime }(z)}{h_{\mu }^{\prime }(z)}%
\right) \geq 1-\sum_{n\geq 1}\frac{r}{\delta _{\mu ,n}^{2}-r}  \label{eq213}
\end{equation}%
where $\left\vert z\right\vert =r.$\newline
Moreover, again by using inequality (\ref{4}), for all $z\in U(\delta _{\mu
,1})\text{ and }\beta \geq 0$ we get the inequality 
\begin{eqnarray}
\beta \left\vert \frac{zh_{\mu }^{\prime \prime }(z)}{h_{\mu }^{\prime }(z)}%
\right\vert &=&\beta \left\vert \sum_{n\geq 1}\frac{z}{\delta _{\mu ,n}^{2}-z%
}\right\vert  \label{eq214} \\
&\leq &\beta \sum_{n\geq 1}\left\vert \frac{z}{\delta _{\mu ,n}^{2}-z}%
\right\vert  \notag \\
&\leq &\beta \sum_{n\geq 1}\frac{r}{\delta _{\mu ,n}^{2}-r}=-\beta \frac{%
rh_{\mu }^{\prime \prime }(r)}{h_{\mu }^{\prime }(r)}.  \notag
\end{eqnarray}%
As a result, from (\ref{eq213}) and (\ref{eq214}) we have 
\begin{equation}
\Re \left( 1+\frac{zh_{\mu }^{\prime \prime }(z)}{h_{\mu }^{\prime }(z)}%
\right) -\beta \left\vert \frac{zh_{\mu }^{\prime \prime }(z)}{h_{\mu
}^{\prime }(z)}\right\vert -\alpha \geq 1-\alpha +(1+\beta )\frac{rh_{\mu
}^{\prime \prime }(r)}{h_{\mu }^{\prime }(r)}  \label{eq215}
\end{equation}%
where $\left\vert z\right\vert =r.$ Thus, for $r\in (0,\delta _{\mu
,1}),~\beta \geq 0\text{ and }\alpha \in \lbrack 0,1)$ we have 
\begin{equation*}
\inf_{\left\vert z\right\vert <r}\left[ \Re \left( 1+\frac{zh_{\mu }^{\prime
\prime }(z)}{h_{\mu }^{\prime }(z)}\right) -\beta \left\vert \frac{zh_{\mu
}^{\prime \prime }(z)}{h_{\mu }^{\prime }(z)}\right\vert -\alpha \right]
=1-\alpha +(1+\beta )\frac{rh_{\mu }^{\prime \prime }(r)}{h_{\mu }^{\prime
}(r)}.
\end{equation*}%
The mapping $\Phi _{\mu }:(0,\delta _{\mu ,1})\rightarrow \mathbb{R}$
defined by 
\begin{equation*}
\Phi _{\mu }(r)=1-\alpha +(1+\beta )\frac{rh_{\mu }^{\prime \prime }(r)}{%
h_{\mu }^{\prime }(r)}=1-\alpha -(1+\beta )\sum_{n\geq 1}\frac{r}{\delta
_{\mu ,n}^{2}-r}
\end{equation*}%
is strictly decreasing since $\lim_{r\searrow 0}\Phi _{\mu }(r)=1>\alpha $
and $\lim_{r\nearrow \delta _{\mu ,1}}\Phi _{\mu }(r)=-\infty $.
Consequently, in wiew of the minimum principle for harmonic functions it
follows that for $\alpha \in \lbrack 0,1),~\beta \geq 0$ and $z\in U(r_{2})%
\mathbb{\ }$we have 
\begin{equation*}
\Re \left( 1+\frac{zh_{\mu }^{\prime \prime }(z)}{h_{\mu }^{\prime }(z)}%
\right) -\beta \left\vert \frac{zh_{\mu }^{\prime \prime }(z)}{h_{\mu
}^{\prime }(z)}\right\vert -\alpha >0
\end{equation*}%
if and only if $r_{2}$ is the unique root of 
\begin{equation*}
1+(1+\beta )\frac{rh_{\mu }^{\prime \prime }(r)}{h_{\mu }^{\prime }(r)}%
=\alpha ,~~\alpha \in \lbrack 0,1)\text{ and }\beta \geq 0.
\end{equation*}%
situated in $(0,\delta _{\mu ,1})$.
\end{proof}

As a result of the Theorem \ref{thrm23}, the following corollary is obtained by taking $\alpha=0$ ve $\beta=1$.

\begin{corollary}\label{snc23}
Let $\mu \in \left( -1,1\right) ,~\mu \neq 0$ and suppose
that $\mu \neq -\frac{1}{2}.$ Then the radius of uniform convexity
of the function $h_{\mu }$ is the smallest positive root
of the equation 
\begin{equation*}
\frac{1}{2}-\mu  +\sqrt{r}\frac{(\frac{5}{^{2}}-\mu )s_{\mu -\frac{%
1}{2},\frac{1}{2}}^{\prime }(\sqrt{r})+\sqrt{r}s_{\mu -\frac{1}{2},\frac{1}{2%
}}^{\prime \prime }(\sqrt{r})}{(\frac{3}{^{2}}-\mu )s_{\mu -\frac{1}{2},%
\frac{1}{2}}(\sqrt{r})+\sqrt{r}s_{\mu -\frac{1}{2},\frac{1}{2}}^{\prime }(%
\sqrt{r})}=0.
\end{equation*}%
Moreover $r_{h_{\mu }}^{uc}<r_{h_{\mu }}^{c}<\delta _{\mu
,1}^{\prime }<\xi _{\mu ,1},$ where $\xi _{\mu ,1}$ and $\delta _{\mu ,1}$
denote the first positive zeros of $s_{\mu -\frac{1}{2},\frac{1}{2}}$ and $%
h_{\mu }^{\prime },$ respectively.
\end{corollary}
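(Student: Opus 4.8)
The plan is to derive this corollary directly from Theorem \ref{thrm23} by specializing the two parameters, so that essentially all the analytic work is already done and only an algebraic reduction remains. First I would invoke the Geometric Interpretation recorded in the Introduction, where it is noted that $1$-$UC(0)=UC$; that is, the conic class $\beta$-$UC(\alpha)$ collapses to the classical family $UC$ of uniformly convex functions exactly when $\beta=1$ and $\alpha=0$. Hence the radius of uniform convexity $r_{h_{\mu}}^{uc}$ is precisely the radius $r_{h_{\mu}}^{\beta-uc(\alpha)}$ furnished by Theorem \ref{thrm23} evaluated at $\beta=1,\ \alpha=0$.

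Next I would substitute these values into the defining equation of Theorem \ref{thrm23}. The term $4(1-\alpha)$ becomes $4$ and the prefactor $(1+\beta)$ becomes $2$, so the equation reads $4-2(1+2\mu-2\sqrt{r}\,Q(r))=0$, where I abbreviate the quotient $Q(r)=\frac{(\frac{5}{2}-\mu)s_{\mu-\frac{1}{2},\frac{1}{2}}^{\prime}(\sqrt{r})+\sqrt{r}s_{\mu-\frac{1}{2},\frac{1}{2}}^{\prime\prime}(\sqrt{r})}{(\frac{3}{2}-\mu)s_{\mu-\frac{1}{2},\frac{1}{2}}(\sqrt{r})+\sqrt{r}s_{\mu-\frac{1}{2},\frac{1}{2}}^{\prime}(\sqrt{r})}$. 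Dividing through by $2$ gives $2-(1+2\mu)+2\sqrt{r}\,Q(r)=0$, and since $2-1=1$ this simplifies to $1-2\mu+2\sqrt{r}\,Q(r)=0$; a further division by $2$ yields $\frac{1}{2}-\mu+\sqrt{r}\,Q(r)=0$, which is exactly the asserted equation. Because these divisions are by the nonzero constant $2$, they neither create nor destroy roots, so the smallest positive root is preserved and the radius of uniform convexity of $h_{\mu}$ is the smallest positive root of the reduced equation.

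Finally, the inequality chain $r_{h_{\mu}}^{uc}<r_{h_{\mu}}^{c}<\delta_{\mu,1}^{\prime}<\xi_{\mu,1}$ is inherited verbatim from the corresponding chain in Theorem \ref{thrm23} under the same substitution, since the radius of convexity $r_{h_{\mu}}^{c}$ and the first positive zeros $\xi_{\mu,1}$ and $\delta_{\mu,1}$ do not depend on $\alpha$ or $\beta$. I do not expect any genuine obstacle: the whole argument is a specialization of an already-proved theorem, and the only care needed is the routine bookkeeping of constants when reducing $4-2(1+2\mu-2\sqrt{r}\,Q(r))=0$ to its final form.
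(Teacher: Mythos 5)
Your proposal is correct and is exactly the paper's own route: the corollary is obtained from Theorem \ref{thrm23} by setting $\alpha=0$ and $\beta=1$ (using the identification $1$-$UC(0)=UC$), and your algebraic reduction of $4-2\left(1+2\mu-2\sqrt{r}\,Q(r)\right)=0$ to $\frac{1}{2}-\mu+\sqrt{r}\,Q(r)=0$ is accurate. The inequality chain indeed carries over verbatim, so nothing further is needed.
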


\begin{center}
\includegraphics{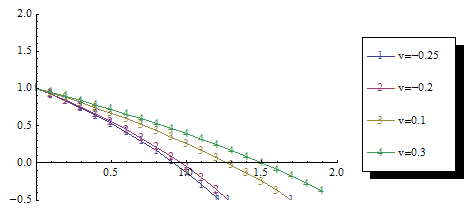}

The graph of the fuction $r\mapsto \frac{1}{2}-\mu  +\sqrt{r}\frac{(\frac{5}{^{2}}-\mu )s_{\mu -\frac{%
1}{2},\frac{1}{2}}^{\prime }(\sqrt{r})+\sqrt{r}s_{\mu -\frac{1}{2},\frac{1}{2%
}}^{\prime \prime }(\sqrt{r})}{(\frac{3}{^{2}}-\mu )s_{\mu -\frac{1}{2},%
\frac{1}{2}}(\sqrt{r})+\sqrt{r}s_{\mu -\frac{1}{2},\frac{1}{2}}^{\prime }(%
\sqrt{r})} $ \vskip 0.3 cm

for $\mu \in \{-0.25, -0.2, 0.1, 0.3\}$ on $[0,2]$
\end{center}

For $\mu=\frac{1}{3}$, Lommel functions defined in terms of the hypergeometric function $ \text{ }_{1}F_{2}$ as follows:
$$s_{-\frac{1}{5},\frac{1}{2}}(z)=\frac{100z^{4/5}}{39} \text{ }_{1}F_{2}\left( 1;\frac{23}{20},\frac{33}{20};-\frac{z^2}{4}  \right). $$
Then, we have 
$$f_{\frac{3}{10}}(z)= z\left[ \text{ }_{1}F_{2}\left( 1;\frac{23}{20},\frac{33}{20};-\frac{z^2}{4} \right)\right]^{5/4},~~ g_{\frac{3}{10}}(z)=z\text{ }_{1}F_{2}\left( 1;\frac{23}{20},\frac{33}{20};-\frac{z^2}{4} \right)  $$
and $$ h_{\frac{3}{10}}(z)=z \text{ }_{1}F_{2}\left( 1;\frac{23}{20},\frac{33}{20};-\frac{z}{4}  \right) . $$ 
We obtain the following results for the functions $f_{\frac{3}{10}},~g_{\frac{3}{10}}$ and $h_{\frac{3}{10}} $.
\begin{itemize}
\item $ f_{\frac{3}{10}}(z) \in UC$ in the disk $U(r_1=0.6623)$,
\item $ g_{\frac{3}{10}}(z) \in UC$ in the disk $U(r_2=0.7376)$,
\item $ h_{\frac{3}{10}}(z) \in UC$ in the disk $U(r_3=1.4961)$,
\end{itemize}
where $r_1,~r_2 $ and $r_3$ is the smallest positive root of the equations given Corollary \ref{snc21}-\ref{snc23} for $\mu=\frac{1}{3}$.

Secondly, the other main result of this section presents the $\beta -$%
uniform convexity of order $\alpha $ of functions $u_{\nu },v_{\nu }\text{
and }w_{\nu },$ related to Struve's one. The first part of next theorem is
an interesting of Lemma \ref{lm22}.

\begin{theorem}
\label{thrm24} Let $\left\vert \nu \right\vert \leq \frac{1}{2}$ and $0\leq
\alpha <1.$ Then the radius of $\beta -$uniform convexity of order $\alpha $
of the function $u_{\nu }$ is the smallest positive root of the equation 
\begin{equation*}
(1-\alpha)+(1+\beta )\left( \frac{r\mathbf{H}_{\nu }^{\prime \prime }(r)}{\mathbf{H}%
_{\nu }^{\prime }(r)}+\left( \frac{1}{\nu +1}-1\right) \frac{r\mathbf{H}%
_{\nu }^{\prime }(r)}{\mathbf{H}_{\nu }(r)}\right) =0 .
\end{equation*}%
Moreover $r_{u_{\nu }}^{\beta -uc(\alpha )}<r_{u_{\nu }}^{c}<h_{\nu
,1}^{\prime }<h_{\nu ,1},$ where $h_{\nu ,1}$ and $h_{\nu ,1}^{\prime }$
denote the first positive zeros of $\mathbf{H}_{\nu }$ and $\mathbf{H}_{\nu
}^{\prime },$ respectively.
\end{theorem}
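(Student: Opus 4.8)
The plan is to follow verbatim the scheme of Theorem \ref{thrm21}, replacing the Lommel function $s_{\mu-\frac{1}{2},\frac{1}{2}}$ by the Struve function $\mathbf{H}_\nu$, the exponent $\mu+\frac{1}{2}$ by $\nu+1$, and the zeros $\xi_{\mu,n},\xi_{\mu,n}'$ by the zeros $h_{\nu,n},h_{\nu,n}'$ of $\mathbf{H}_\nu$ and $\mathbf{H}_\nu'$. First I would record the Mittag--Leffler (Hadamard) expansions established by Baricz and Ya\u{g}mur in \cite{Baricz5},
$$\mathbf{H}_\nu(z)=\frac{(z/2)^{\nu+1}}{\sqrt{\pi/4}\,\Gamma(\nu+\tfrac{3}{2})}\prod_{n\geq 1}\left(1-\frac{z^2}{h_{\nu,n}^2}\right),$$
together with the analogous product for $\mathbf{H}_\nu'$ over the zeros $h_{\nu,n}'$, both valid for $|\nu|\leq\frac{1}{2}$; in this range Lemma \ref{lm22} moreover guarantees that all these zeros are real and simple. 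Since $u_\nu=(c\,\mathbf{H}_\nu)^{1/(\nu+1)}$ with $c=\sqrt{\pi}\,2^\nu\Gamma(\nu+\tfrac{3}{2})$ a constant, logarithmic differentiation gives
$$1+\frac{zu_\nu''(z)}{u_\nu'(z)}=1+\frac{z\mathbf{H}_\nu''(z)}{\mathbf{H}_\nu'(z)}+\left(\frac{1}{\nu+1}-1\right)\frac{z\mathbf{H}_\nu'(z)}{\mathbf{H}_\nu(z)},$$
and feeding in the two products turns this into
$$1+\frac{zu_\nu''(z)}{u_\nu'(z)}=1-\sum_{n\geq 1}\frac{2z^2}{h_{\nu,n}^{\prime 2}-z^2}-\left(\frac{1}{\nu+1}-1\right)\sum_{n\geq 1}\frac{2z^2}{h_{\nu,n}^2-z^2},$$
the exact analogue of the displayed identity in the proof of Theorem \ref{thrm21}.

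The argument then splits according to the sign of $\frac{1}{\nu+1}-1$, which now changes at $\nu=0$ in place of $\mu=\frac{1}{2}$. For $\nu\in[-\frac{1}{2},0]$ this coefficient is nonnegative, so both sums carry nonnegative weights and I would apply inequality (\ref{4}) of Lemma \ref{L1} directly, with the substitution $z\mapsto z^2$, $b\mapsto h_{\nu,n}^{\prime 2}$ or $h_{\nu,n}^2$ and $r\mapsto r^2$, to bound $\Re(1+zu_\nu''/u_\nu')$ from below and $\beta|zu_\nu''/u_\nu'|$ from above, exactly as in (\ref{eq21})--(\ref{eq22}). For $\nu\in(0,\frac{1}{2}]$ the coefficient is negative; here I would set $\lambda=1-\frac{1}{\nu+1}\in(0,\frac{1}{3}]$, write the identity as $1-\sum_{n\geq 1}\big(\frac{2z^2}{h_{\nu,n}^{\prime 2}-z^2}-\lambda\frac{2z^2}{h_{\nu,n}^2-z^2}\big)$, and invoke inequalities (\ref{2}) and (\ref{3}) on each paired term, the hypothesis $h_{\nu,n}^2>h_{\nu,n}^{\prime 2}$ being supplied by the interlacing of Lemma \ref{lm22}. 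In either case one reaches
$$\Re\left(1+\frac{zu_\nu''(z)}{u_\nu'(z)}\right)-\beta\left|\frac{zu_\nu''(z)}{u_\nu'(z)}\right|-\alpha\ \geq\ 1-\alpha+(1+\beta)\frac{ru_\nu''(r)}{u_\nu'(r)}$$
for $|z|\leq r<h_{\nu,1}'$, with equality at $z=r$.

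Since equality is attained at $z=r$, the infimum of the left-hand side over $|z|<r$ equals $\Psi_\nu(r):=1-\alpha+(1+\beta)\frac{ru_\nu''(r)}{u_\nu'(r)}$, and the last task is to analyse this function on $(0,h_{\nu,1}')$. I would show it is strictly decreasing, with $\Psi_\nu(0^+)=1-\alpha>0$ and $\Psi_\nu(r)\to-\infty$ as $r\nearrow h_{\nu,1}'$; the minimum principle for harmonic functions then forces the defining inequality (\ref{eq2}) to hold precisely on the disk whose radius is the unique zero of $\Psi_\nu$, i.e. the smallest positive root of the stated equation, and the chain $r_{u_\nu}^{\beta-uc(\alpha)}<r_{u_\nu}^c<h_{\nu,1}'<h_{\nu,1}$ then follows by comparison with the case $\alpha=\beta=0$ together with the interlacing in Lemma \ref{lm22}. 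I expect the only genuine obstacle to be the monotonicity of $\Psi_\nu$ when $\nu\in(0,\frac{1}{2}]$: differentiating term by term one must check $h_{\nu,n}^2(h_{\nu,n}^{\prime 2}-r^2)^2<h_{\nu,n}^{\prime 2}(h_{\nu,n}^2-r^2)^2$ for every $n$ and every $r<\sqrt{h_{\nu,n}h_{\nu,n}'}$, which again reduces to $h_{\nu,n}'<h_{\nu,n}$. For $\nu\in[-\frac{1}{2},0]$ monotonicity is immediate, as $\Psi_\nu$ is then $1-\alpha$ minus an increasing nonnegative sum; and, in contrast with the Lommel case, no separate Laguerre--P\'olya argument is required here, since Lemma \ref{lm22} already provides real, interlacing zeros throughout the full range $|\nu|\leq\frac{1}{2}$.
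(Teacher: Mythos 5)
Your proposal is correct and follows essentially the same route as the paper's own proof: the same logarithmic-derivative identity, the same Mittag--Leffler expansions from \cite{Baricz5}, the same case split at $\nu=0$ using inequality (\ref{4}) on one side and inequalities (\ref{2})--(\ref{3}) with $\lambda=1-\frac{1}{\nu+1}$ on the other, the same monotonicity argument for $\Psi_\nu$ via the interlacing inequality, and the minimum principle to conclude. Your closing observation that no separate Laguerre--P\'olya step is needed here (unlike the Lommel case) is also exactly how the paper proceeds.
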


\begin{proof}
We note that $$ 1+\frac{zu_{\nu }^{\prime \prime }(z)}{u_{\nu }^{\prime }(z)}= 1+\frac{z%
\mathbf{H}_{\nu }^{\prime \prime }(z)}{\mathbf{H}_{\nu }^{\prime }(z)}%
+\left( \frac{ 1}{\nu +1 }-1\right)\frac{z\mathbf{H}_{\nu }^{\prime }(z)}{%
\mathbf{H}_{\nu }(z)}  $$
Using the Mittag-Leffler expansions of $\mathbf{H}_{\nu}$ and $\mathbf{H}_{\nu}^{\prime}$ \cite[Theorem 4]{Baricz5} given by
\begin{equation}\label{yeq4}
\mathbf{H}_{\nu}(z)=\frac{z^{\nu+1}}{\sqrt{\pi}\Gamma\left(\nu+\frac{3}{2}\right)}\prod_{n\geq 1}\left(1-\frac{z^2}{h_{\nu ,n}^2}\right)
\end{equation}
and 
\begin{equation}\label{yeq5}
\mathbf{H}_{\nu}^{\prime}(z)=\frac{\left(\nu+1\right)z^{\nu}}{\sqrt{\pi}2^{\nu}\Gamma\left(\nu+\frac{3}{2}\right)}\prod_{n\geq 1}\left(1-\frac{z^2}{h_{\nu ,n}^{\prime 2}}\right)
\end{equation}
where $h_{\nu ,n}$ and $h_{\nu ,n}^{\prime}$ denote the $n$-th positive root
of $\mathbf{H}_{\nu}$ and $\mathbf{H}_{\nu}^{\prime},$ respectively. From (\ref{yeq4}) and (\ref{yeq4}), we obtain 
$$\frac{z\mathbf{H}_{\nu}^{\prime}(z)}{\mathbf{H}_{\nu}(z)}=\nu+1-\sum_{n\geq 1} \frac{2z^2}{h_{\nu ,n}^2-z^2},~~1+\frac{z\mathbf{H}_{\nu}^{\prime \prime}(z)}{\mathbf{H}_{\nu}^{\prime}(z)}=\nu+1-\sum_{n\geq 1} \frac{2z^2}{h_{\nu ,n}^{\prime 2}-z^2}.$$ 
Thus, we have
\begin{eqnarray*}
1+\frac{zu_{\nu }^{\prime \prime }(z)}{u_{\nu }^{\prime }(z)}= 
1-\left( \frac{ 1}{\nu +1 }-1\right) \sum_{n\geq 1}\frac{2z^{2}}{h_{\nu
,n}^{2}-z^{2}}-\sum_{n\geq 1}\frac{2z^{2}}{h_{\nu ,n}^{\prime 2}-z^{2}}.
\end{eqnarray*}
Now, the proof will be presented in two cases by considering the intervals
of $\nu$.\newline
Firstly, suppose that $\nu \in \left[-\frac{1}{2},0\right].$ Since $\frac{ 1}{%
\nu +1 }-1\geq 0,$ inequality (\ref{4}) implies 
\begin{eqnarray}  \label{eq216}
\Re \left(1+\frac{zu_{\nu }^{\prime \prime }(z)}{u_{\nu }^{\prime }(z)}%
\right)&=& 1-\sum_{n\geq 1}\Re \left(\frac{2z^{2}}{h_{\nu ,n}^{\prime
2}-z^{2}}\right)-\left(\frac{ 1}{\nu +1 }-1\right)\sum_{n\geq 1}\Re \left(%
\frac{2z^{2}}{h_{\nu ,n}^{2}-z^{2}} \right) \\
&\geq& 1-\sum_{n\geq 1}\frac{2r^{2}}{h_{\nu ,n}^{\prime 2}-r^{2}}-\left(%
\frac{ 1}{\nu +1 }-1\right)\sum_{n\geq 1}\frac{2r^{2}}{h_{\nu ,n}^{2}-r^{2}}
\notag \\
&=& 1+\frac{ru_{\nu }^{\prime \prime }(r)}{u_{\nu }^{\prime }(r)}.  \notag
\end{eqnarray}
On the other hand, if in the second part of inequality (\ref{4}) we replace $%
z$ by $z^2$ and by $h_{\nu ,1}^{\prime}$ and $h_{\nu ,1}$, respectively,
then it follows that 
\begin{equation*}
\left|\frac{2z^{2}}{h_{\nu ,n}^{\prime 2}-z^{2}} \right| \leq \frac{2r^{2}}{%
h_{\nu ,n}^{\prime 2}-r^{2}} \text{ and } \left|\frac{2z^{2}}{h_{\nu
,n}^{2}-z^{2}}\right|\leq \frac{2r^{2}}{h_{\nu ,n}^{2}-r^{2}}
\end{equation*}
provided that $\left|z\right|\leq r< h_{\nu ,1}^{\prime}<h_{\nu ,1}.$ These
two inequalities and the conditions $\frac{ 1}{\nu +1 }-1 \geq 0$ and $\beta
\geq 0$, imply that

\begin{eqnarray}
\beta \left\vert \frac{zu_{\nu }^{\prime \prime }(z)}{u_{\nu }^{\prime }(z)}%
\right\vert  &=&\beta \left\vert \sum_{n\geq 1}\left( \frac{2z^{2}}{h_{\nu
,n}^{\prime 2}-z^{2}}+\left( \frac{1}{\nu +1}-1\right) \frac{2z^{2}}{h_{\nu
,n}^{2}-z^{2}}\right) \right\vert   \label{eq217} \\
&\leq &\beta \sum_{n\geq 1}\left\vert \frac{2z^{2}}{h_{\nu ,n}^{\prime
2}-z^{2}}\right\vert +\beta \left( \frac{1}{\nu +1}-1\right) \sum_{n\geq
1}\left\vert \frac{2z^{2}}{h_{\nu ,n}^{2}-z^{2}}\right\vert   \notag \\
&\leq &\beta \sum_{n\geq 1}\left( \frac{2r^{2}}{h_{\nu ,n}^{\prime 2}-r^{2}}%
+\left( \frac{1}{\nu +1}-1\right) \frac{2r^{2}}{h_{\nu ,n}^{2}-r^{2}}\right)
=-\beta \frac{ru_{\nu }^{\prime \prime }(r)}{u_{\nu }^{\prime }(r)}.  \notag
\end{eqnarray}%
From (\ref{eq216}) and (\ref{eq217}) we get 
\begin{equation}
\Re \left( 1+\frac{zu_{\nu }^{\prime \prime }(z)}{u_{\nu }^{\prime }(z)}%
\right) -\beta \left\vert \frac{zu_{\nu }^{\prime \prime }(z)}{u_{\nu
}^{\prime }(z)}\right\vert -\alpha \geq 1-\alpha +(1+\beta )\frac{ru_{\nu
}^{\prime \prime }(r)}{u_{\nu }^{\prime }(r)},\text{ \ \ } \label{eq218}
\end{equation}
where $\left\vert
z\right\vert \leq r<h_{\nu ,1}^{\prime }\text{ and }\beta \geq 0,\alpha \in
\lbrack 0,1). $\\
Secondly, in the case $\nu \in \left[ 0,\frac{1}{2}\right] $ the roots $%
0<h_{\nu ,1}^{\prime }<h_{\nu ,1},$ are real for every natural number $n.$
Moreover, inequality (\ref{4}) implies that 
\begin{equation*}
\Re \left( \frac{2z^{2}}{h_{\nu ,n}^{\prime 2}-z^{2}}\right) \leq \left\vert 
\frac{2z^{2}}{h_{\nu ,n}^{\prime 2}-z^{2}}\right\vert \leq \frac{2r^{2}}{%
h_{\nu ,n}^{\prime 2}-r^{2}},~~\left\vert z\right\vert \leq r<h_{\nu
,1}^{\prime }<h_{\nu ,1}
\end{equation*}%
and 
\begin{equation*}
\Re \left( \frac{2z^{2}}{h_{\nu ,n}^{2}-z^{2}}\right) \leq \left\vert \frac{%
2z^{2}}{h_{\nu ,n}^{2}-z^{2}}\right\vert \leq \frac{2r^{2}}{h_{\nu
,n}^{2}-r^{2}},~~\left\vert z\right\vert \leq r<h_{\nu ,1}^{\prime }<h_{\nu
,1}.
\end{equation*}%
Putting $\lambda =1-\frac{1}{\nu +1},$ inequality (\ref{3}) implies 
\begin{equation*}
\Re \left( \frac{2z^{2}}{h_{\nu ,n}^{\prime 2}-z^{2}}-\left( 1-\frac{1}{\nu
+1}\right) \frac{2z^{2}}{h_{\nu ,n}^{2}-z^{2}}\right) \leq \frac{2r^{2}}{%
h_{\nu ,n}^{\prime 2}-r^{2}}-\left( 1-\frac{1}{\nu +1}\right) \frac{2r^{2}}{%
h_{\nu ,n}^{2}-r^{2}},
\end{equation*}%
for $\left\vert z\right\vert \leq r<h_{\nu ,1}^{\prime }<h_{\nu ,1},$ and we
get 
\begin{eqnarray}
\Re \left( 1+\frac{zu_{\nu }^{\prime \prime }(z)}{u_{\nu }^{\prime }(z)}%
\right)  &=&1-\sum_{n\geq 1}\Re \left( \frac{2z^{2}}{h_{\nu ,n}^{\prime
2}-z^{2}}-\left( 1-\frac{1}{\nu +1}\right) \frac{2z^{2}}{h_{\nu ,n}^{2}-z^{2}%
}\right)   \label{eq219} \\
&\geq &1-\sum_{n\geq 1}\left( \frac{2r^{2}}{h_{\nu ,n}^{\prime 2}-r^{2}}%
-\left( 1-\frac{1}{\nu +1}\right) \frac{2r^{2}}{h_{\nu ,n}^{2}-r^{2}}\right) 
\notag \\
&=&1+\frac{ru_{\nu }^{\prime \prime }(r)}{u_{\nu }^{\prime }(r)}.  \notag
\end{eqnarray}%
Now, if in the inequality (\ref{2}) we replace $z$ by $z^{2}$ and we again
put $\lambda =1-\frac{1}{\nu +1},$ it follows that 
\begin{equation*}
\left\vert \frac{2z^{2}}{h_{\nu ,n}^{\prime 2}-z^{2}}-\left( 1-\frac{1}{\nu
+1}\right) \frac{2z^{2}}{h_{\nu ,n}^{2}-z^{2}}\right\vert \leq \frac{2r^{2}}{%
h_{\nu ,n}^{\prime 2}-r^{2}}-\left( 1-\frac{1}{\nu +1}\right) \frac{2r^{2}}{%
h_{\nu ,n}^{2}-r^{2}},
\end{equation*}%
provided that $\left\vert z\right\vert \leq r<h_{\nu ,1}^{\prime }<h_{\nu
,1}.$ Thus, for $\beta \geq 0$ we have 
\begin{eqnarray}
\beta \left\vert \frac{zu_{\nu }^{\prime \prime }(z)}{u_{\nu }^{\prime }(z)}%
\right\vert  &=&\beta \left\vert \sum_{n\geq 1}\left( \frac{2z^{2}}{h_{\nu
,n}^{\prime 2}-z^{2}}-\left( 1-\frac{1}{\nu +1}\right) \frac{2z^{2}}{h_{\nu
,n}^{2}-z^{2}}\right) \right\vert   \label{eq220} \\
&\leq &\beta \sum_{n\geq 1}\left\vert \frac{2z^{2}}{h_{\nu ,n}^{\prime
2}-z^{2}}-\left( 1-\frac{1}{\nu +1}\right) \frac{2z^{2}}{h_{\nu ,n}^{2}-z^{2}%
}\right\vert   \notag \\
&\leq &\beta \sum_{n\geq 1}\left( \frac{2r^{2}}{h_{\nu ,n}^{\prime 2}-r^{2}}%
-\left( 1-\frac{1}{\nu +1}\right) \frac{2r^{2}}{h_{\nu ,n}^{2}-r^{2}}\right)
=-\beta \frac{ru_{\nu }^{\prime \prime }(r)}{u_{\nu }^{\prime }(r)}.  \notag
\end{eqnarray}%
As a result, the following inequality be infered from (\ref{eq219}) and (\ref%
{eq220}) such as (\ref{eq216}) and (\ref{eq217}) 
\begin{equation}
\Re \left( 1+\frac{zu_{\nu }^{\prime \prime }(z)}{u_{\nu }^{\prime }(z)}%
\right) -\beta \left\vert \frac{zu_{\nu }^{\prime \prime }(z)}{u_{\nu
}^{\prime }(z)}\right\vert -\alpha \geq 1-\alpha +(1+\beta )\frac{ru_{\nu
}^{\prime \prime }(r)}{u_{\nu }^{\prime }(r)},~~
\label{eq221}
\end{equation}
where $\left\vert z\right\vert \leq
r<h_{\nu ,1}^{\prime }\text{ and }\beta \geq 0,\alpha \in \lbrack 0,1). $ \\
Equality holds (\ref{eq218}) and (\ref{eq221}) if and only if $z=r.$ Thus it
follows that 
\begin{equation*}
\inf_{\left\vert z\right\vert <r}\left[ \Re \left( 1+\frac{zu_{\nu }^{\prime
\prime }(z)}{u_{\nu }^{\prime }(z)}\right) -\beta \left\vert \frac{zu_{\nu
}^{\prime \prime }(z)}{u_{\nu }^{\prime }(z)}\right\vert -\alpha \right]
=1-\alpha +(1+\beta )\frac{ru_{\nu }^{\prime \prime }(r)}{u_{\nu }^{\prime
}(r)},~~
\end{equation*}
where $r\in (0,h_{\nu ,1}^{\prime })\text{ and }\beta \geq 0,\alpha \in
\lbrack 0,1). $ \\
The mapping $\psi _{\nu }:(0,h_{\nu ,1}^{\prime })\rightarrow \mathbb{R}$
defined by 
\begin{equation*}
\psi _{\nu }(r)=1+(1+\beta )\frac{ru_{\nu }^{\prime \prime }(r)}{u_{\nu
}^{\prime }(r)}=1-(1+\beta )\sum_{n\geq 1}\left( \frac{2r^{2}}{h_{\nu
,n}^{\prime 2}-r^{2}}-\left( 1-\frac{1}{\nu +1}\right) \frac{2r^{2}}{h_{\nu
,n}^{2}-r^{2}}\right) 
\end{equation*}%
is strictly decreasing for all $\left\vert \nu \right\vert \leq \frac{1}{2}$
and $\beta \geq 0$. Namely, we obtain 
\begin{eqnarray*}
\psi _{\nu }^{\prime }(r) &=&-(1+\beta )\sum_{n\geq 1}\left( \frac{4rh_{\nu
,n}^{\prime 2}}{\left( h_{\nu ,n}^{\prime 2}-r^{2}\right) ^{2}}-\left( 1-%
\frac{1}{\nu +1}\right) \frac{4rh_{\nu ,n}^{2}}{\left( h_{\nu
,n}^{2}-r^{2}\right) ^{2}}\right)  \\
&<&(1+\beta )\sum_{n\geq 1}\left( \frac{4rh_{\nu ,n}^{2}}{\left( h_{\nu
,n}^{2}-r^{2}\right) ^{2}}-\frac{4rh_{\nu ,n}^{\prime 2}}{\left( h_{\nu
,n}^{\prime 2}-r^{2}\right) ^{2}}\right) <0
\end{eqnarray*}%
for $\nu \in \left[ 0,\frac{1}{2}\right] ,~r\in (0,h_{\nu ,1}^{\prime })%
\text{ and }\beta \geq 0.$ Here we used again that the zeros $h_{\nu ,n}$
and $h_{\nu ,n}^{\prime }$ interlace, and for all $n\in \mathbb{N}%
,~\left\vert \nu \right\vert \leq \frac{1}{2}$ and $r<\sqrt{h_{\nu ,n}h_{\nu
,n}^{\prime }}$ we have that 
\begin{equation*}
h_{\nu ,n}^{2}\left( h_{\nu ,n}^{\prime 2}-r^{2}\right) ^{2}<h_{\nu
,n}^{\prime 2}\left( h_{\nu ,n}^{2}-r^{2}\right) ^{2}.
\end{equation*}%
Observe that when $\nu \in \left[ \frac{1}{2},0\right] $ and $r>0$ we have
also that $\psi _{\nu }^{\prime }(r)<0$, and thus $\psi _{\nu }$ is indeed
strictly decreasing for all $\left\vert \nu \right\vert \leq \frac{1}{2}$
and $\beta \geq 0$.\newline
Now, since $\lim_{r\searrow 0}\psi _{\nu }(r)=1$ and $\lim_{r\nearrow h_{\nu
,1}^{\prime }}\psi _{\nu }(r)=-\infty $, in wiew of the minimum principle
for harmonic functions it follows that for $\left\vert \nu \right\vert \leq 
\frac{1}{2}$ and $z\in U({r_{3}})$ we get 
\begin{equation}
\Re \left( 1+\frac{zu_{\nu }^{\prime \prime }(z)}{u_{\nu }^{\prime }(z)}%
\right) -\beta \left\vert \frac{zu_{\nu }^{\prime \prime }(z)}{u_{\nu
}^{\prime }(z)}\right\vert >\alpha .  \label{eq222}
\end{equation}%
if and only if $r_{3}$ is the unique root of 
\begin{equation*}
1+(1+\beta )\frac{ru_{\nu }^{\prime \prime }(r)}{u_{\nu }^{\prime }(r)}%
=\alpha ,~~\alpha \in \lbrack 0,1)\text{ and }\beta \geq 0.
\end{equation*}%
situated in $\left( 0,h_{\nu ,1}^{\prime }\right) .$
\end{proof}

As a result of the Theorem \ref{thrm24}, the next corollary is obtained by taking $\alpha=0$ ve $\beta=1$.

\begin{corollary}\label{snc24}
Let $\left\vert \nu \right\vert \leq \frac{1}{2}$. Then the radius of uniform convexity of the function $u_{\nu }$ is the smallest positive root of the equation 
\begin{equation*}
1+2\left( \frac{r\mathbf{H}_{\nu }^{\prime \prime }(r)}{\mathbf{H}%
_{\nu }^{\prime }(r)}+\left( \frac{1}{\nu +1}-1\right) \frac{r\mathbf{H}%
_{\nu }^{\prime }(r)}{\mathbf{H}_{\nu }(r)}\right) =0.
\end{equation*}%
Moreover $r_{u_{\nu }}^{uc}<r_{u_{\nu }}^{c}<h_{\nu
,1}^{\prime }<h_{\nu ,1},$ where $h_{\nu ,1}$ and $h_{\nu ,1}^{\prime }$
denote the first positive zeros of $\mathbf{H}_{\nu }$ and $\mathbf{H}_{\nu
}^{\prime },$ respectively.
\end{corollary}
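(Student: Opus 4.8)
The plan is to obtain this statement as an immediate specialization of Theorem~\ref{thrm24}, since no new analysis is required once the two parameters are fixed. Recall from the introduction that the conic class $\beta-UC(\alpha)$ reduces, for $\beta=1$ and $\alpha=0$, to the class $1-UC(0)=UC$ of uniformly convex functions in the sense of Goodman and R\o nning; equivalently, the defining inequality (\ref{eq2}) with $\beta=1$ and $\alpha=0$ is exactly R\o nning's criterion (\ref{d1}) of Theorem~\ref{T1}. Consequently the radius $r_{u_\nu}^{uc}$ of uniform convexity is identical with $r_{u_\nu}^{1-uc(0)}$, and every assertion Theorem~\ref{thrm24} makes about $u_\nu$ transfers directly after the substitution $\alpha=0$, $\beta=1$.

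First I would substitute these two parameter values into the characterizing equation of Theorem~\ref{thrm24}. The factor $1-\alpha$ becomes $1$ and the factor $1+\beta$ becomes $2$, so the equation
\[
(1-\alpha)+(1+\beta)\left(\frac{r\mathbf{H}_{\nu}^{\prime\prime}(r)}{\mathbf{H}_{\nu}^{\prime}(r)}+\left(\frac{1}{\nu+1}-1\right)\frac{r\mathbf{H}_{\nu}^{\prime}(r)}{\mathbf{H}_{\nu}(r)}\right)=0
\]
collapses to precisely the displayed equation of the corollary, and hence its smallest positive root is unchanged and equals $r_{u_\nu}^{uc}$. Then the chain $r_{u_\nu}^{uc}<r_{u_\nu}^{c}<h_{\nu,1}^{\prime}<h_{\nu,1}$ is inherited verbatim from the corresponding chain in Theorem~\ref{thrm24} at $\alpha=0$, $\beta=1$, resting on the interlacing of the zeros of $\mathbf{H}_{\nu}$ and $\mathbf{H}_{\nu}^{\prime}$ guaranteed by Lemma~\ref{lm22} for $\left\vert\nu\right\vert\leq\frac{1}{2}$.

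Since the whole argument is a parameter substitution into an already-proved theorem, there is no genuine obstacle to overcome; the only point deserving a remark is the identification $1-UC(0)=UC$ recorded in the introduction, which is what legitimizes naming the resulting quantity the radius of uniform convexity of $u_\nu$ rather than the radius of $1$-uniform convexity of order $0$.
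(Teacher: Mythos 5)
Your proposal is correct and coincides with the paper's own derivation: the paper obtains Corollary~\ref{snc24} precisely by setting $\alpha=0$ and $\beta=1$ in Theorem~\ref{thrm24}, using the identification $1-UC(0)=UC$ from the introduction. Nothing further is needed.
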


\begin{center}
\includegraphics{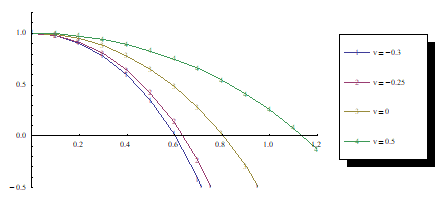}

The graph of the fuction $r\mapsto 1+2\left( \frac{r\mathbf{H}_{\nu }^{\prime \prime }(r)}{\mathbf{H}%
_{\nu }^{\prime }(r)}+\left( \frac{1}{\nu +1}-1\right) \frac{r\mathbf{H}%
_{\nu }^{\prime }(r)}{\mathbf{H}_{\nu }(r)}\right) $  \vskip 0.3 cm

for $\nu \in \{-0.3, -0.25, 0, 0.5\}$ on $[0,1.2]$
\end{center}

\begin{theorem}
\label{thrm25} Let $\left\vert \nu \right\vert \leq \frac{1}{2}$ and $0\leq
\alpha <1.$ Then the radius of $\beta -$uniform convexity of order $\alpha $
of the function $v_{\nu }$ is the smallest positive root of the equation 
\begin{equation*}
(1-\alpha)-\left( 1+\beta \right) \left( 1+\nu -r\frac{(1-\nu )\mathbf{H}_{\nu
}^{\prime }(r)+r\mathbf{H}_{\nu }^{\prime \prime }(r)}{-\nu \mathbf{H}_{\nu
}(r)+r\mathbf{H}_{\nu }^{\prime }(r)}\right) =0.
\end{equation*}%
Moreover $r_{v _{\nu }}^{\beta -uc(\alpha )}<r_{v _{\nu }}^{c}<\varsigma
_{\nu ,1}<h_{\nu ,1},$ where $h_{\nu ,1}$ and $\varsigma _{\nu ,1}$ denote
the first positive zeros of $\mathbf{H}_{\nu }$ and $v_{\nu }^{\prime },$
respectively.
\end{theorem}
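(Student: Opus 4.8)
The plan is to follow verbatim the template of Theorem \ref{thrm22} for $g_\mu$, since $v_\nu$ is the Struve counterpart of the Lommel function $g_\mu$. First I would let $\varsigma_{\nu,n}$ denote the $n$-th positive zero of $v_\nu'$ and recall from \cite{Baricz5}, obtained via Hadamard's factorization and the reality of zeros guaranteed by Lemma \ref{lm22}, the Mittag-Leffler type expansion
\begin{equation*}
1+\frac{zv_\nu''(z)}{v_\nu'(z)}=1-\sum_{n\geq 1}\frac{2z^2}{\varsigma_{\nu,n}^2-z^2}.
\end{equation*}
A direct logarithmic-derivative computation on $v_\nu(z)=\sqrt{\pi}2^\nu z^{-\nu}\Gamma(\nu+\tfrac{3}{2})\mathbf{H}_\nu(z)$ shows that $\tfrac{rv_\nu''(r)}{v_\nu'(r)}$ equals $-(1+\nu)+r\tfrac{(1-\nu)\mathbf{H}_\nu'(r)+r\mathbf{H}_\nu''(r)}{-\nu\mathbf{H}_\nu(r)+r\mathbf{H}_\nu'(r)}$, so that $1-\alpha+(1+\beta)\tfrac{rv_\nu''(r)}{v_\nu'(r)}=0$ is precisely the stated equation; I would also record, as in Theorem \ref{thrm22}, that the smallest positive zero $\varsigma_{\nu,1}$ of $v_\nu'$ does not exceed $h_{\nu,1}$.

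Next, for $|z|\leq r<\varsigma_{\nu,1}$ I would apply inequality (\ref{4}) termwise. The real-part estimate gives
\begin{equation*}
\Re\left(1+\frac{zv_\nu''(z)}{v_\nu'(z)}\right)\geq 1-\sum_{n\geq 1}\frac{2r^2}{\varsigma_{\nu,n}^2-r^2}=1+\frac{rv_\nu''(r)}{v_\nu'(r)},
\end{equation*}
while the triangle inequality together with the modulus bound in (\ref{4}) yields
\begin{equation*}
\beta\left|\frac{zv_\nu''(z)}{v_\nu'(z)}\right|\leq \beta\sum_{n\geq 1}\frac{2r^2}{\varsigma_{\nu,n}^2-r^2}=-\beta\frac{rv_\nu''(r)}{v_\nu'(r)}.
\end{equation*}
Subtracting and inserting $-\alpha$ produces the key bound
\begin{equation*}
\Re\left(1+\frac{zv_\nu''(z)}{v_\nu'(z)}\right)-\beta\left|\frac{zv_\nu''(z)}{v_\nu'(z)}\right|-\alpha\geq 1-\alpha+(1+\beta)\frac{rv_\nu''(r)}{v_\nu'(r)},
\end{equation*}
with equality at $z=r$, so the infimum over $|z|<r$ equals the right-hand side.

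Finally, I would study the real map $\Lambda_\nu(r)=1-\alpha+(1+\beta)\tfrac{rv_\nu''(r)}{v_\nu'(r)}=1-\alpha-(1+\beta)\sum_{n\geq 1}\tfrac{2r^2}{\varsigma_{\nu,n}^2-r^2}$ on $(0,\varsigma_{\nu,1})$. Since each summand $r\mapsto\tfrac{2r^2}{\varsigma_{\nu,n}^2-r^2}$ is strictly increasing, $\Lambda_\nu$ is strictly decreasing, with $\lim_{r\searrow 0}\Lambda_\nu(r)=1-\alpha>0$ and $\lim_{r\nearrow\varsigma_{\nu,1}}\Lambda_\nu(r)=-\infty$. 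By the minimum principle for harmonic functions, the defining inequality (\ref{eq2}) holds on the disk whose radius is the unique zero of $\Lambda_\nu$ in $(0,\varsigma_{\nu,1})$, namely the smallest positive root of the displayed equation, and the chain $r_{v_\nu}^{\beta-uc(\alpha)}<r_{v_\nu}^{c}<\varsigma_{\nu,1}<h_{\nu,1}$ follows as before.

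The only non-routine ingredient is the expansion itself, whose validity rests on the reality and simplicity of the zeros of $v_\nu'$; here I would rely on Lemma \ref{lm22} and the Hadamard-factorization argument for the Struve function carried out in \cite{Baricz5}, exactly as the $g_\mu$ proof relied on the corresponding Lommel facts. Once that expansion is in hand, every remaining step is a verbatim transcription of the $g_\mu$ argument with $\gamma_{\mu,n}$ replaced by $\varsigma_{\nu,n}$.
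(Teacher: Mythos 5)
Your proposal is correct and follows essentially the same route as the paper: the Mittag--Leffler expansion $1+\tfrac{zv_\nu''(z)}{v_\nu'(z)}=1-\sum_{n\geq 1}\tfrac{2z^2}{\varsigma_{\nu,n}^2-z^2}$ from \cite{Baricz5}, the termwise bounds from inequality (\ref{4}), the infimum attained at $z=r$, and the monotonicity of $r\mapsto 1-\alpha+(1+\beta)\tfrac{rv_\nu''(r)}{v_\nu'(r)}$ combined with the minimum principle. The only cosmetic difference is that you derive the explicit quotient form by a logarithmic-derivative computation where the paper simply quotes it, and you correctly keep the limit as $1-\alpha$ rather than $1$.
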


\begin{proof}
Let $h_{\nu ,n}$ and $\varsigma _{\nu ,n}$ denote the $n$-th positive root
of $\mathbf{H}_{\nu }$ and $v_{\nu }^{\prime },$ respectively and the
smallest positive root of $v_{\nu }^{\prime }$ does not exceed the first
positive root of $\mathbf{H}_{\nu }$. In \cite{Baricz5}, the following equality was proved: 
\begin{equation*}
1+\frac{zv_{\nu }^{\prime \prime }(z)}{v_{\nu }^{\prime }(z)}=-\nu +z\frac{%
(1-\nu )\mathbf{H}_{\nu }^{\prime }(z)+z\mathbf{H}_{\nu }^{\prime \prime }(z)%
}{-\nu \mathbf{H}_{\nu }(z)+z\mathbf{H}_{\nu }^{\prime }(z)}=1-\sum_{n\geq 1}%
\frac{2z^{2}}{\varsigma _{\nu ,n}^{2}-z^{2}}.
\end{equation*}%
By using inequality (\ref{4}), for all $z\in U(\varsigma _{\nu ,1})$ we have
the inequality 
\begin{equation}
\Re \left( 1+\frac{zv_{\nu }^{\prime \prime }(z)}{v_{\nu }^{\prime }(z)}%
\right) \geq 1-\sum_{n\geq 1}\frac{2r^{2}}{\varsigma _{\nu ,n}^{2}-r^{2}}
\label{eq223}
\end{equation}%
where $\left\vert z\right\vert =r.$\newline
On the other hand, again by using inequality (\ref{4}), for all $z\in
U(\varsigma _{\nu ,1})\text{ and }\beta \geq 0$ we get the inequality 
\begin{eqnarray}
\beta \left\vert \frac{zv_{\nu }^{\prime \prime }(z)}{v_{\nu }^{\prime }(z)}%
\right\vert  &=&\beta \left\vert \sum_{n\geq 1}\frac{2z^{2}}{\varsigma _{\nu
,n}^{2}-z^{2}}\right\vert   \label{eq224} \\
&\leq &\beta \sum_{n\geq 1}\left\vert \frac{2z^{2}}{\varsigma _{\nu
,n}^{2}-z^{2}}\right\vert   \notag \\
&\leq &\beta \sum_{n\geq 1}\frac{2r^{2}}{\varsigma _{\nu ,n}^{2}-r^{2}}%
=-\beta \frac{rv_{\nu }^{\prime \prime }(r)}{v_{\nu }^{\prime }(r)}.  \notag
\end{eqnarray}%
Finally the following inequality be infered from (\ref{eq223}) and (\ref%
{eq224}) 
\begin{equation}
\Re \left( 1+\frac{zv_{\nu }^{\prime \prime }(z)}{v_{\nu }^{\prime }(z)}%
\right) -\beta \left\vert \frac{zv_{\nu }^{\prime \prime }(z)}{v_{\nu
}^{\prime }(z)}\right\vert -\alpha \geq 1-\alpha +(1+\beta )\frac{rv_{\nu
}^{\prime \prime }(r)}{v_{\nu }^{\prime }(r)},~~\beta \geq 0, \label{eq225}
\end{equation}%
where $\left\vert z\right\vert =r.$ Thus, for $r\in (0,\varsigma _{\nu
,1}),~\beta \geq 0\text{ and }\alpha \in \lbrack 0,1)$ we obtain 
\begin{equation*}
\inf_{\left\vert z\right\vert <r}\left[ \Re \left( 1+\frac{zv_{\nu }^{\prime
\prime }(z)}{v_{\nu }^{\prime }(z)}\right) -\beta \left\vert \frac{zv_{\nu
}^{\prime \prime }(z)}{v_{\nu }^{\prime }(z)}\right\vert -\alpha \right]
=1-\alpha +(1+\beta )\frac{rv_{\nu }^{\prime \prime }(r)}{v_{\nu }^{\prime
}(r)}.
\end{equation*}%
The mapping $\Theta _{\nu }:(0,\varsigma _{\nu ,1})\rightarrow \mathbb{R}$
defined by 
\begin{equation*}
\Theta _{\nu }(r)=1+(1+\beta )\frac{rv_{\nu }^{\prime \prime }(r)}{v_{\nu
}^{\prime }(r)}=1-(1+\beta )\sum_{n\geq 1}\frac{2r^{2}}{\varsigma _{\nu
,n}^{2}-r^{2}}
\end{equation*}%
is strictly decreasing since $\lim_{r\searrow 0}\Theta _{\nu }(r)=1$ and $%
\lim_{r\nearrow \varsigma _{\nu ,1}}\Theta _{\nu }(r)=-\infty $. As a result
in wiew of the minimum principle for harmonic functions it follows that for $%
\alpha \in \lbrack 0,1),~\beta \geq 0$ and $z\in U(r_{4})$ we have 
\begin{equation*}
\Re \left( 1+\frac{zv_{\nu }^{\prime \prime }(z)}{v_{\nu }^{\prime }(z)}%
\right) -\beta \left\vert \frac{zv_{\nu }^{\prime \prime }(z)}{v_{\nu
}^{\prime }(z)}\right\vert >\alpha .
\end{equation*}%
if and only if $r_{4}$ is the unique root of 
\begin{equation*}
1+(1+\beta )\frac{rv_{\nu }^{\prime \prime }(r)}{v_{\nu }^{\prime }(r)}%
=\alpha ,~~\alpha \in \lbrack 0,1)\text{ and }\beta \geq 0,
\end{equation*}%
situated in $(0,\varsigma _{\nu ,1}).$
\end{proof}

As a result of the Theorem \ref{thrm25}, the following corollary is obtained by taking $\alpha=0$ ve $\beta=1$.

\begin{corollary}\label{snc25}
Let $\left\vert \nu \right\vert \leq \frac{1}{2}$. Then the radius of uniform convexity of the function $v_{\nu }$ is the smallest positive root of the equation 
\begin{equation*}
1-2\left( 1+\nu -r\frac{(1-\nu )\mathbf{H}_{\nu
}^{\prime }(r)+r\mathbf{H}_{\nu }^{\prime \prime }(r)}{-\nu \mathbf{H}_{\nu
}(r)+r\mathbf{H}_{\nu }^{\prime }(r)}\right) =0.
\end{equation*}%
Moreover $r_{v _{\nu }}^{uc}<r_{v _{\nu }}^{c}<\varsigma
_{\nu ,1}<h_{\nu ,1},$ where $h_{\nu ,1}$ and $\varsigma _{\nu ,1}$ denote
the first positive zeros of $\mathbf{H}_{\nu }$ and $v_{\nu }^{\prime },$
respectively.
\end{corollary}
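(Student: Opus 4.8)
The plan is to derive this statement purely as the specialization $\alpha = 0$, $\beta = 1$ of Theorem \ref{thrm25}, so no genuinely new analytic work is needed. First I would recall from the introduction that the class $\beta-UC(\alpha)$ degenerates, precisely when $\beta = 1$ and $\alpha = 0$, to the class $1-UC(0) = UC$ of uniformly convex functions of Goodman and R\o nning; equivalently, the defining inequality $\Re\{1 + zf''/f'\} > \beta|zf''/f'| + \alpha$ collapses to R\o nning's criterion $\Re\{1 + zf''/f'\} > |zf''/f'|$ of Theorem \ref{T1}. Consequently the radius of $\beta$-uniform convexity of order $\alpha$ of $v_\nu$ specializes exactly to its radius of uniform convexity $r_{v_\nu}^{uc}$.

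Next I would substitute $\alpha = 0$ and $\beta = 1$ directly into the defining equation produced in Theorem \ref{thrm25}. The factor $(1-\alpha)$ becomes $1$ and the factor $(1+\beta)$ becomes $2$, so that
\[
(1-\alpha) - (1+\beta)\left(1 + \nu - r\frac{(1-\nu)\mathbf{H}_{\nu}^{\prime}(r) + r\mathbf{H}_{\nu}^{\prime\prime}(r)}{-\nu\mathbf{H}_{\nu}(r) + r\mathbf{H}_{\nu}^{\prime}(r)}\right) = 0
\]
turns verbatim into the equation displayed in the statement. The smallest positive root is unchanged, because it is still read off from the same function $\Theta_\nu$ on $(0,\varsigma_{\nu,1})$: there $\Theta_\nu$ is strictly decreasing with $\lim_{r\searrow 0}\Theta_\nu(r) = 1$ and $\lim_{r\nearrow \varsigma_{\nu,1}}\Theta_\nu(r) = -\infty$, so by the minimum principle for harmonic functions the root is unique in that interval and equals $r_{v_\nu}^{uc}$.

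Finally, the chain $r_{v_\nu}^{uc} < r_{v_\nu}^{c} < \varsigma_{\nu,1} < h_{\nu,1}$ carries over directly from Theorem \ref{thrm25}, since for every $\beta \geq 0$ and $\alpha \in [0,1)$ the $\beta$-uniform convexity condition is strictly more restrictive than plain convexity, forcing $r_{v_\nu}^{\beta-uc(\alpha)} < r_{v_\nu}^{c}$, while the interlacing inequality $\varsigma_{\nu,1} < h_{\nu,1}$ between the first zero of $v_\nu^{\prime}$ and the first zero of $\mathbf{H}_\nu$ was already established there. I do not anticipate any real obstacle here; the only point worth checking is that the denominator $-\nu\mathbf{H}_{\nu}(r) + r\mathbf{H}_{\nu}^{\prime}(r)$ does not vanish on $(0,\varsigma_{\nu,1})$, which holds because it is a constant multiple of $v_\nu^{\prime}(r)$ and $\varsigma_{\nu,1}$ is by definition its first positive zero.
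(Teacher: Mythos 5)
Your proposal is correct and matches the paper's own derivation: the corollary is obtained exactly by setting $\alpha=0$ and $\beta=1$ in Theorem \ref{thrm25}, using that $1\text{-}UC(0)=UC$, and inheriting the inequality chain $r_{v_\nu}^{uc}<r_{v_\nu}^{c}<\varsigma_{\nu,1}<h_{\nu,1}$ from the theorem. Your extra remark about the non-vanishing of the denominator $-\nu\mathbf{H}_{\nu}(r)+r\mathbf{H}_{\nu}^{\prime}(r)$ on $(0,\varsigma_{\nu,1})$ is a harmless additional check not present in the paper.
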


\begin{center}
\includegraphics{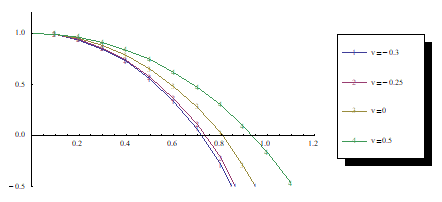}

The graph of the fuction $r\mapsto 1-2 \left( 1+\nu -r\frac{(1-\nu )\mathbf{H}_{\nu
}^{\prime }(r)+r\mathbf{H}_{\nu }^{\prime \prime }(r)}{-\nu \mathbf{H}_{\nu
}(r)+r\mathbf{H}_{\nu }^{\prime }(r)}\right) $  \vskip 0.3 cm

for $\nu \in \{-0.3, -0.25, 0, 0.5\}$ on $[0,1.2]$
\end{center}

\begin{theorem}
\label{thrm26} Let $\left\vert \nu \right\vert \leq \frac{1}{2}$ and $0\leq
\alpha <1.$ Then the radius of $\beta-$uniformly convex of order $\alpha $ of the function $w_{\nu }$ is the smallest positive root of the equation 
\begin{equation*}
2(1-\alpha)-\left( 1+\beta \right) \left( 1+\nu -\sqrt{r}\frac{(2-\nu )\mathbf{H}_{\nu
}^{\prime }(\sqrt{r})+\sqrt{r}\mathbf{H}_{\nu }^{\prime \prime }(\sqrt{r})}{%
(1-\nu )\mathbf{H}_{\nu }(\sqrt{r})+\sqrt{r}\mathbf{H}_{\nu }^{\prime }(%
\sqrt{r})}\right) =0 .
\end{equation*}%
Moreover $r_{w_{\nu }}^{\beta -uc(\alpha )}<r_{w_{\nu }}^{c}<\sigma _{\nu
,1}<h_{\nu ,1},$ where $h_{\nu ,1}$ and $\sigma _{\nu ,1}$ denote the first
positive zeros of $\mathbf{H}_{\nu }$ and $w_{\nu }^{\prime },$ respectively.
\end{theorem}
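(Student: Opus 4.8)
The plan is to follow verbatim the template used for the Lommel function $h_{\mu}$ in Theorem \ref{thrm23}, since $w_{\nu}$ is built from $\mathbf{H}_{\nu}(\sqrt{z})$ exactly as $h_{\mu}$ is built from $s_{\mu-\frac12,\frac12}(\sqrt{z})$. First I would produce two representations of the quantity governing $\beta-$uniform convexity. Writing $t=\sqrt{z}$ and differentiating $w_{\nu}(z)=\sqrt{\pi}2^{\nu}\Gamma(\nu+\tfrac32)\,t^{1-\nu}\mathbf{H}_{\nu}(t)$ logarithmically in $z$, a short computation yields the closed form
\begin{equation*}
1+\frac{zw_{\nu}^{\prime\prime}(z)}{w_{\nu}^{\prime}(z)}=\frac{1-\nu}{2}+\frac{\sqrt{z}\left[(2-\nu)\mathbf{H}_{\nu}^{\prime}(\sqrt{z})+\sqrt{z}\,\mathbf{H}_{\nu}^{\prime\prime}(\sqrt{z})\right]}{2\left[(1-\nu)\mathbf{H}_{\nu}(\sqrt{z})+\sqrt{z}\,\mathbf{H}_{\nu}^{\prime}(\sqrt{z})\right]},
\end{equation*}
which is precisely the expression appearing in the statement once the final radius equation is multiplied through by $2$. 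On the other hand, by the Hadamard factorisation of $w_{\nu}^{\prime}$ established in \cite{Baricz5} (legitimate for $\left|\nu\right|\leq\frac12$ thanks to the interlacing of Lemma \ref{lm22}), the same quantity admits the Mittag-Leffler sum
\begin{equation*}
1+\frac{zw_{\nu}^{\prime\prime}(z)}{w_{\nu}^{\prime}(z)}=1-\sum_{n\geq1}\frac{z}{\sigma_{\nu,n}-z},
\end{equation*}
where $\sigma_{\nu,n}$ denotes the $n$-th positive zero of $w_{\nu}^{\prime}$, the first of which does not exceed $h_{\nu,1}$.

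Next I would estimate on the disk $\left|z\right|\leq r<\sigma_{\nu,1}$. Because the sum involves a single family of real simple zeros, only inequality (\ref{4}) of Lemma \ref{L1} is needed, with no splitting into subcases as in Theorems \ref{thrm21} and \ref{thrm24}. Applying (\ref{4}) termwise with $b$ replaced by $\sigma_{\nu,n}$ bounds the real part from below,
\begin{equation*}
\Re\left(1+\frac{zw_{\nu}^{\prime\prime}(z)}{w_{\nu}^{\prime}(z)}\right)\geq1-\sum_{n\geq1}\frac{r}{\sigma_{\nu,n}-r}=1+\frac{rw_{\nu}^{\prime\prime}(r)}{w_{\nu}^{\prime}(r)},
\end{equation*}
while the triangle inequality together with (\ref{4}) bounds the modulus by $\beta\sum_{n\geq1}r/(\sigma_{\nu,n}-r)=-\beta\,rw_{\nu}^{\prime\prime}(r)/w_{\nu}^{\prime}(r)$. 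Combining the two gives
\begin{equation*}
\Re\left(1+\frac{zw_{\nu}^{\prime\prime}(z)}{w_{\nu}^{\prime}(z)}\right)-\beta\left|\frac{zw_{\nu}^{\prime\prime}(z)}{w_{\nu}^{\prime}(z)}\right|-\alpha\geq1-\alpha+(1+\beta)\frac{rw_{\nu}^{\prime\prime}(r)}{w_{\nu}^{\prime}(r)},
\end{equation*}
with equality exactly when $z=r$, so the infimum of the left-hand side over $\left|z\right|<r$ equals the right-hand side.

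Finally I would study the real auxiliary map $\Phi_{\nu}:(0,\sigma_{\nu,1})\to\mathbb{R}$,
\begin{equation*}
\Phi_{\nu}(r)=1-\alpha+(1+\beta)\frac{rw_{\nu}^{\prime\prime}(r)}{w_{\nu}^{\prime}(r)}=1-\alpha-(1+\beta)\sum_{n\geq1}\frac{r}{\sigma_{\nu,n}-r}.
\end{equation*}
Each summand is strictly increasing in $r$, so $\Phi_{\nu}$ is strictly decreasing; moreover $\lim_{r\searrow0}\Phi_{\nu}(r)=1-\alpha>0$ and $\lim_{r\nearrow\sigma_{\nu,1}}\Phi_{\nu}(r)=-\infty$. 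Hence $\Phi_{\nu}$ has a unique zero in $(0,\sigma_{\nu,1})$, and by the minimum principle for harmonic functions the defining inequality (\ref{eq2}) holds throughout $U(r)$ precisely up to that zero; that is, the radius is the unique root of $1+(1+\beta)rw_{\nu}^{\prime\prime}(r)/w_{\nu}^{\prime}(r)=\alpha$ in $(0,\sigma_{\nu,1})$. Substituting the closed-form identity of the first paragraph and multiplying by $2$ turns this into the displayed equation. The ordering $r_{w_{\nu}}^{\beta-uc(\alpha)}<r_{w_{\nu}}^{c}<\sigma_{\nu,1}<h_{\nu,1}$ then follows because $\tfrac{rw_{\nu}^{\prime\prime}}{w_{\nu}^{\prime}}<0$ forces the $\beta-$uniform-convexity root to lie below the convexity root ($\alpha=\beta=0$), while the last two inequalities are the zero ordering inherited from \cite{Baricz5}.

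I do not anticipate a genuine obstacle here: unlike the $u_{\nu}$ and $f_{\mu}$ cases, $w_{\nu}^{\prime}$ possesses a single set of real simple zeros, so the two-parameter estimate of Lemma \ref{L1}(i) with $\lambda\in(0,1)$ is unnecessary, the dominating bound (\ref{4}) suffices, and the monotonicity of $\Phi_{\nu}$ is immediate. The one ingredient that must be imported rather than derived is the Hadamard product / Mittag-Leffler expansion of $w_{\nu}^{\prime}$ from \cite{Baricz5}, whose validity rests on $\left|\nu\right|\leq\frac12$ so that the pertinent zeros are real and interlace (Lemma \ref{lm22}); the only mildly delicate point to confirm is that the first positive zero of $w_{\nu}^{\prime}$ does not exceed $h_{\nu,1}$, which guarantees that the summation representation, and hence every termwise estimate above, is legitimate on the whole disk $U(\sigma_{\nu,1})$.
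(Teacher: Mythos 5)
Your proposal is correct and follows essentially the same route as the paper: the closed-form expression and the Mittag--Leffler sum $1+\frac{zw_{\nu}''(z)}{w_{\nu}'(z)}=1-\sum_{n\geq 1}\frac{z}{\sigma_{\nu,n}^{2}-z}$ imported from \cite{Baricz5}, termwise application of inequality (\ref{4}), the combined lower bound $1-\alpha+(1+\beta)\frac{rw_{\nu}''(r)}{w_{\nu}'(r)}$, and the monotone auxiliary function $\Phi_{\nu}$ together with the minimum principle. The only differences are cosmetic (you derive the logarithmic-derivative identity rather than citing it, and your writing of the zeros as $\sigma_{\nu,n}$ rather than $\sigma_{\nu,n}^{2}$ is a self-consistent notational choice), so no gap to report.
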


\begin{proof}
Let $h_{\nu ,n}$ and $\sigma _{\nu ,n}$ denote the $n$-th positive root of $%
\mathbf{H}_{\nu }$ and $w_{\nu }^{\prime },$ respectively and the smallest
positive root of $w_{\nu }^{\prime }$ does not exceed the first positive
root of $\mathbf{H}_{\nu }$. In \cite{Baricz5}, the following equality was proved,
\begin{equation*}
1+\frac{zw_{\nu }^{\prime \prime }(z)}{w_{\nu }^{\prime }(z)}=\frac{1}{2}%
\left[ 1-\nu +\sqrt{r}\frac{(2-\nu )\mathbf{H}_{\nu }^{\prime }(\sqrt{r})+%
\sqrt{r}\mathbf{H}_{\nu }^{\prime \prime }(\sqrt{r})}{(1-\nu )\mathbf{H}%
_{\nu }(\sqrt{r})+\sqrt{r}\mathbf{H}_{\nu }^{\prime }(\sqrt{r})}\right]
=1-\sum_{n\geq 1}\frac{z}{\sigma _{\nu ,n}^{2}-z}.
\end{equation*}%
By using inequality (\ref{4}), for all $z\in U(\sigma _{\nu ,1})$ we obtain
the inequality 
\begin{equation}
\Re \left( 1+\frac{zw_{\nu }^{\prime \prime }(z)}{w_{\nu }^{\prime }(z)}%
\right) \geq 1-\sum_{n\geq 1}\frac{r}{\sigma _{\nu ,n}^{2}-r}  \label{eq226}
\end{equation}%
where $\left\vert z\right\vert =r.$\newline
Moreover, again by using inequality (\ref{4}), for all $z\in U(\sigma _{\nu
,1})\text{ and }\beta \geq 0$ we get the inequality 
\begin{eqnarray}
\beta \left\vert \frac{zw_{\nu }^{\prime \prime }(z)}{w_{\nu }^{\prime }(z)}%
\right\vert  &=&\beta \left\vert \sum_{n\geq 1}\frac{z}{\sigma _{\nu
,n}^{2}-z}\right\vert   \label{eq227} \\
&\leq &\beta \sum_{n\geq 1}\left\vert \frac{z}{\sigma _{\nu ,n}^{2}-z}%
\right\vert   \notag \\
&\leq &\beta \sum_{n\geq 1}\frac{r}{\sigma _{\nu ,n}^{2}-r}=-\beta \frac{%
rw_{\nu }^{\prime \prime }(r)}{w_{\nu }^{\prime }(r)}.  \notag
\end{eqnarray}%
As a result, the following inequality be infered from (\ref{eq226}) and (\ref%
{eq227}) 
\begin{equation}
\Re \left( 1+\frac{zw_{\nu }^{\prime \prime }(z)}{w_{\nu }^{\prime }(z)}%
\right) -\beta \left\vert \frac{zw_{\nu }^{\prime \prime }(z)}{w_{\nu
}^{\prime }(z)}\right\vert -\alpha \geq 1-\alpha +(1+\beta )\frac{rw_{\nu
}^{\prime \prime }(r)}{w_{\nu }^{\prime }(r)},~~\beta \geq 0\text{ and }%
\alpha \in \lbrack 0,1).  \label{eq228}
\end{equation}%
where $\left\vert z\right\vert =r.$ So, for $r\in (0,\sigma _{\nu
,1}),~\beta \geq 0\text{ and }\alpha \in \lbrack 0,1)$ we have 
\begin{equation*}
\inf_{\left\vert z\right\vert <r}\left[ \Re \left( 1+\frac{zw_{\nu }^{\prime
\prime }(z)}{w_{\nu }^{\prime }(z)}\right) -\beta \left\vert \frac{zw_{\nu
}^{\prime \prime }(z)}{w_{\nu }^{\prime }(z)}\right\vert -\alpha \right]
=1-\alpha +(1+\beta )\frac{rw_{\nu }^{\prime \prime }(r)}{w_{\nu }^{\prime
}(r)}.
\end{equation*}%
The mapping $\Phi _{\nu }:(0,\sigma _{\nu ,1})\rightarrow \mathbb{R}$
defined by 
\begin{equation*}
\Phi _{\nu }(r)=1+(1+\beta )\frac{rw_{\nu }^{\prime \prime }(r)}{w_{\nu
}^{\prime }(r)}=1-(1+\beta )\sum_{n\geq 1}\frac{r}{\sigma _{\nu ,n}^{2}-r}
\end{equation*}%
is strictly decreasing since $\lim_{r\searrow 0}\Phi _{\nu }(r)=1>\alpha $
and $\lim_{r\nearrow \sigma _{\nu ,1}}\Phi _{\nu }(r)=-\infty $.
Consequently, in wiew of the minimum principle for harmonic functions it
follows that for $\alpha \in \lbrack 0,1),~\beta \geq 0$ and $z\in U(r_{5})$
we obtain 
\begin{equation*}
\Re \left( 1+\frac{zw_{\nu }^{\prime \prime }(z)}{w_{\nu }^{\prime }(z)}%
\right) -\beta \left\vert \frac{zw_{\nu }^{\prime \prime }(z)}{w_{\nu
}^{\prime }(z)}\right\vert >\alpha .
\end{equation*}%
if and only if $r_{5}$ is the unique root of 
\begin{equation*}
1+(1+\beta )\frac{rw_{\nu }^{\prime \prime }(r)}{w_{\nu }^{\prime }(r)}%
=\alpha ,~~\alpha \in \lbrack 0,1)\text{ and }\beta \geq 0.
\end{equation*}%
situated in $(0,\sigma _{\nu ,1})$.
\end{proof}

As a result of the Theorem \ref{thrm26}, the next corollary is obtained by taking $\alpha=0$ ve $\beta=1$.

\begin{corollary}\label{snc26}
Let $\left\vert \nu \right\vert \leq \frac{1}{2}$. Then the uniformly convexity
of the function $w_{\nu }$ is the smallest positive root of the equation 
\begin{equation*}
-\nu +\sqrt{r}\frac{(2-\nu )\mathbf{H}_{\nu
}^{\prime }(\sqrt{r})+\sqrt{r}\mathbf{H}_{\nu }^{\prime \prime }(\sqrt{r})}{%
(1-\nu )\mathbf{H}_{\nu }(\sqrt{r})+\sqrt{r}\mathbf{H}_{\nu }^{\prime }(%
\sqrt{r})}=0.
\end{equation*}%
Moreover $r_{w_{\nu }}^{uc}<r_{w_{\nu }}^{c}<\sigma _{\nu
,1}<h_{\nu ,1},$ where $h_{\nu ,1}$ and $\sigma _{\nu ,1}$ denote the first
positive zeros of $\mathbf{H}_{\nu }$ and $w_{\nu }^{\prime },$ respectively.
\end{corollary}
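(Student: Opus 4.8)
The plan is to obtain this statement as a direct specialization of Theorem \ref{thrm26}. First I would recall from the discussion following \eqref{eq2} that the class $\beta-UC(\alpha)$ reduces to the class $UC$ of uniformly convex functions precisely when $\beta=1$ and $\alpha=0$; consequently the radius $r_{w_\nu}^{\beta-uc(\alpha)}$ specializes to the radius of uniform convexity $r_{w_\nu}^{uc}$ under the same choice of parameters. Thus the entire content of the corollary is already encoded in Theorem \ref{thrm26}, and what remains is purely a matter of substitution and algebraic simplification.

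Next I would set $\alpha=0$ and $\beta=1$ in the defining equation of Theorem \ref{thrm26}, namely
\begin{equation*}
2(1-\alpha)-\left(1+\beta\right)\left(1+\nu-\sqrt{r}\frac{(2-\nu)\mathbf{H}_{\nu}^{\prime}(\sqrt{r})+\sqrt{r}\mathbf{H}_{\nu}^{\prime\prime}(\sqrt{r})}{(1-\nu)\mathbf{H}_{\nu}(\sqrt{r})+\sqrt{r}\mathbf{H}_{\nu}^{\prime}(\sqrt{r})}\right)=0.
\end{equation*}
Writing $S(r)$ for the fractional term, with these values the left-hand side becomes $2-2\left(1+\nu-S(r)\right)$. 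Dividing through by $2$ yields $1-\left(1+\nu-S(r)\right)=0$, and the two constant terms cancel, leaving exactly
\begin{equation*}
-\nu+\sqrt{r}\frac{(2-\nu)\mathbf{H}_{\nu}^{\prime}(\sqrt{r})+\sqrt{r}\mathbf{H}_{\nu}^{\prime\prime}(\sqrt{r})}{(1-\nu)\mathbf{H}_{\nu}(\sqrt{r})+\sqrt{r}\mathbf{H}_{\nu}^{\prime}(\sqrt{r})}=0,
\end{equation*}
which is the equation in the statement; its smallest positive root therefore coincides with the radius of uniform convexity of $w_{\nu}$.

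Finally, the chain of strict inequalities $r_{w_{\nu}}^{uc}<r_{w_{\nu}}^{c}<\sigma_{\nu,1}<h_{\nu,1}$ is inherited verbatim from the corresponding assertion $r_{w_{\nu}}^{\beta-uc(\alpha)}<r_{w_{\nu}}^{c}<\sigma_{\nu,1}<h_{\nu,1}$ in Theorem \ref{thrm26}, once the two radii have been identified as above. Since no new analysis is required, there is no genuine obstacle here; the only point meriting care is the bookkeeping of the constants $2(1-\alpha)$ and $1+\beta$ at $\alpha=0$ and $\beta=1$, so that the factor of $2$ cancels cleanly and the stray additive constant is removed, reproducing precisely the displayed equation.
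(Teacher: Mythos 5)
Your proposal is correct and matches the paper exactly: the paper obtains Corollary \ref{snc26} precisely by substituting $\alpha=0$ and $\beta=1$ into Theorem \ref{thrm26}, and your algebraic simplification (dividing by $2$ and cancelling the constant $1$) reproduces the stated equation, with the inequality chain inherited directly from the theorem. Nothing further is needed.
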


\begin{center}
\includegraphics{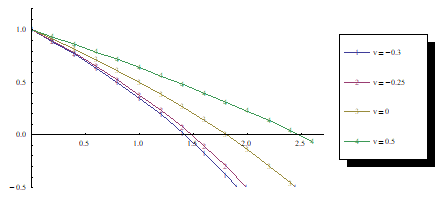}

The graph of the fuction $r\mapsto  -\nu +\sqrt{r}\frac{(2-\nu )\mathbf{H}_{\nu
}^{\prime }(\sqrt{r})+\sqrt{r}\mathbf{H}_{\nu }^{\prime \prime }(\sqrt{r})}{%
(1-\nu )\mathbf{H}_{\nu }(\sqrt{r})+\sqrt{r}\mathbf{H}_{\nu }^{\prime }(%
\sqrt{r})}$  \vskip 0.3 cm

for $\nu \in \{-0.3, -0.25, 0, 0.5\}$ on $[0,2.7]$
\end{center}

Using the following representation of Struve functions of order $1/2$ in terms of elementary trigonometric functions
$$\mathbf{H}_{\frac{1}{2}}(z)= \sqrt{\frac{2}{\pi z}} \left(1-\cos z\right) $$
we obtain 
$$u_{\frac{1}{2}}(z)= 2^{\frac{2}{3}}{\left( \frac{1-\cos z}{\sqrt{z}} \right)}^{\frac{2}{3}},~~~ v_{\frac{1}{2}}(z)=2\left(\frac{1-\cos z}{z} \right) \text{ and } w_{\frac{1}{2}}(z)=2\left(1-\cos \sqrt{z} \right). $$
We state the following results for the functions $u_{\frac{1}{2}},~v_{\frac{1}{2}}$ and $w_{\frac{1}{2}} $.
\begin{itemize}
\item $ u_{\frac{1}{2}}(z) \in UC$ in the disk $U(r_1=1.1382)$,
\item $ v_{\frac{1}{2}}(z) \in UC$ in the disk $U(r_2=0.9349)$,
\item $ w_{\frac{1}{2}}(z) \in UC$ in the disk $U(r_3=2.4674)$,
\end{itemize}
where $r_1,~r_2 $ and $r_3$ is the smallest positive root of the equations \begin{itemize}
\item $5+\left(-5+8z^2\right)\cos z-2z\left(2z+\sin z\right)=0$,
\item $\cos z\left(2z^2-3\right)-z\sin z+3=0$,
\item $\sqrt{z}\cot \sqrt{z}=0 $, respectively.
\end{itemize}

\begin{remark}\label{yrm1}
For $\beta=0$, Theorems \ref{thrm21}, \ref{thrm22} and \ref{thrm23} reduce to \cite[Thm. $3$, (a),(b) and (c)]{Baricz5}, respectively, for the case $\mu \in (-1,1),~ \mu \neq 0$ and $\mu \neq -\frac{1}{2}$. \\ 
Moreover, Theorems \ref{thrm24}, \ref{thrm25} and \ref{thrm26} reduce to \cite[Thm. $4$, (a),(b) and (c)]{Baricz5}, respectively, on putting $\beta=0$, for the case $|\nu| \leq \frac{1}{2}$. 

\end{remark}


\begin{thebibliography}{99}
\bibitem{Baricz02} Á. Baricz, P. A. Kupán, R. Szász, The radius of starlikeness
of normalized Bessel functions of the first kind, Proc. Amer. Math. Soc.
142(5) (2014) 2019--2025.

\bibitem{Baricz03} Á. Baricz, R. Szász, The radius of convexity of normalized
Bessel functions of the first kind, Anal. Appl. 12(5) (2014) 485-509.

\bibitem{Baricz04} Á. Baricz, H. Orhan, R. Szász, The radius of $\alpha -$%
convexity of normalized Bessel functions of the first kind, Comput. Method.
Func. Theo. 16(1) (2016) 93-103.

\bibitem{Baricz05} Á. Baricz, R. Szász, The radius of convexity of normalized
Bessel functions, Analysis Math. 41(3) (2015) 141-151.

\bibitem{Baricz06} Á. Baricz, M. Ça\u{g}lar, E. Deniz, Starlikeness of Bessel
functions and their derivatives, Math. Ineq. Appl. 19(2) (2016) 439--449.

\bibitem{Baricz1} Á. Baricz, S. Koumandos, Turán type inequalities for some
Lommel functions of the first kind. Proc. Edinb. Math. Soc. 59(3) (2016),
569-579.

\bibitem{Baricz2} Á. Baricz, R. Szász, Close-to-convexity of some special
functions and their derivatives. Bull. Malays. Math. Sci. Soc. 39(1) (2016),
427-437.

\bibitem{Baricz3} Á. Baricz, D. K. Dimitrov, H. Orhan, N. Ya\u{g}mur, Radii
of starlikeness of some special functions. Proc. Amer. Math. Soc. 144
(2016), 3355-3367.

\bibitem{Baricz4} Á. Baricz, S. Ponnusamy, S. Singh, Turán type inequalities
for Struve functions. J. Math. Anal. Appl.
445(1) (2017), 971-984.

\bibitem{Baricz5} Á. Baricz, N. Ya\u gmur, Geometric properties of some
Lommel and Struve functions. Ramanujan J. 42 (2017), 325-346.


\bibitem{Bhar} R. Bharti, R. Parvatham and A. Swaminathan, On subclasses
of uniformly convex functions and corresponding class of starlike functions.
Tamkang J. Math. 28 (1997), 17-32.

\bibitem{Deniz} E. Deniz, R. Szász, The radius of uniform convexity of
Bessel functions, J. Math. Anal. Appl., 453(1)
2017, 572-588.

\bibitem{Goo} A. W. Goodman, On uniformly convex functions, Ann. Polon.
Math. 56 (1991) 87-92.

\bibitem{Kanas1} S. Kanas, A. Wisniowska, Conic regions and k-uniform convexity, J. Comput. Appl. Math. 105 (1999) 327–336.

\bibitem{Kanas2} S. Kanas, A. Wisniowska, Conic domains and starlike functions, Rev. Roumaine Math. Pures Appl. 45 (2000) 647–657.

\bibitem{Kanas3} S. Kanas, T. Yaguchi, Subclasses of k-uniform convex and starlike functions defined by generalized derivative, I, Indian J. Pure Appl. Math.  32 (9) (2001) 1275-1282.

\bibitem{Koumandos} S. Koumandos, M. Lamprecht, The zeros of certain Lommel
functions. Proc. Amer. Math. Soc. 140, 3091-3100 (2012).

\bibitem{Levin} B. Ya. Levin, Lectures on entire functions. Vol. 150. Amer.
Math. Soc. (1996).

\bibitem{Orhan} H. Orhan, N. Ya\u{g}mur, Geometric properties of generalized
Struve functions. An. Ştiint. Univ. Al. I.Cuza Ia şi. Mat. (N.S) (2014).

\bibitem{Rav} V. Ravichandran, On uniformly convex functions, Ganita 53(2)
(2002) 117--124.

\bibitem{10} F. R\o nning, Uniformly convex functions and a corresponding
class of starlike functions, Proc. Amer. Math. Soc. 118(1) (1993) 189-196.

\bibitem{Steinig70} J. Steinig, The real zeros of Struve's functions. SIAM
J. Math. Anal. \textbf{1(}3\textbf{), }365-375 (1970).

\bibitem{Steinig72} J. Steinig, The sign of Lommel's Function. Trans. Am.
Math. Soc. \textbf{163}, 123-129 (1972).

\bibitem{watson} G. N. Watson, A Treatise of the Theory of Bessel Functions.
Cambridge University Press, Cambridge, 1944.

\bibitem{yagmur} N. Ya\u{g}mur, H. Orhan, Starlikeness and convexity of
generalized Struve functions. Abstr. Appl. Anal. 2013: Art. 954513 (2013).
\end{thebibliography}
\end{document}